\newtheorem{theorem}{Theorem}[section]
\newtheorem{definition}{Definition}[section]
\newtheorem{lemma}{Lemma}[section]
\newtheorem{proposition}{Proposition}[section]
\newtheorem{corollary}{Corollary}[section]
\newtheorem{remark}{Remark}[section]
\newcommand{\Ind}{\mathrm{Ind}}
\newcommand{\Det}{\mathrm{Det}}
\newcommand{\Aut}{\mathrm{Aut}}
\newcommand{\sech}{\mathrm{sech}}
\newcommand{\End}{\mathrm{End}}
\newcommand{\id}{\mathrm{id}}
\newcommand{\An}{\mathrm{An}}
\newcommand{\supp}{\mathrm{supp\, }}
\begin{document}

\title{Analytic continuation of Toeplitz operators and commuting families of $C^*-$algebras}

\author{Khalid Bdarneh}

\address{Department of Mathematics, University of Jordan, Amman, Jordan}
\email{K.bdarneh@ju.edu.jo}
\author{Gestur \'Olafsson }
\footnote{The research of G. \'Olafsson was partially supported by Simons grant 586106.}
\address{ Department of Mathematics, Louisiana State University, Baton Rouge, LA 70803, USA}
\email{olafsson@lsu.edu}

\maketitle
\begin{abstract}
 We consider the Toeplitz operators on the weighted Bergman spaces over the unit ball $\mathbb{B}^n$ and their analytic continuation. We proved the commutativity of the $C^*-$algebras generated by the analytic continuation of Toeplitz operators with a special class of symbols that satisfy an invariant property, and we showed that these commutative $C^*-$algebras with symbols invariant under compact subgroups of $SU(n,1)$ are completely characterized in terms of restriction to multiplicity free representations. Moreover, we extended the restriction principal to the analytic continuation case for suitable maximal abelian subgroups of $SU(n,1)$, we obtained the generalized Segal-Bargmann transform and we showed that it acts as a convolution operator. Furthermore, we proved that Toeplitz operators are unitarly equivalent to a convolution operator and we provided integral formulas for their spectra.

\end{abstract}

\section{introduction}
Commutative $C^*-$algebras provide a framework for studying spectral properties of operators. The Gelfand-Naimark theorem states that every commutative $C^*-$algebra is $*-$ isomorphic to $C_0(X)$ for some locally compact Hausdorff space $X$.

One interesting problem is to find families of commutative $C^*-$algebras generated by Toeplitz operators. The algebra of Toeplitz operators is dense in the algebra of bounded operators under the strong operator topology. However, restricting the class of symbols ensures the existence of rich commutative families of Toeplitz operators. For example, consider the Bergman space on the unit disc $\mathbb{D}$, the $C^*-$algebra generated by Toeplitz operators with essentially bounded symbols is commutative if and only if there is a pencil of hyperbolic geodesics such that the symbols of the Toeplitz operators are constant on the cycles of this pencil \cite{grudsky2006commutative}. In other words, we need the symbols to be invariant under the maximal abelian subgroups of $\Aut(\mathbb{D})$. This result was extended for the unit ball $\mathbb{B}^n$ for the case $n>1$ in \cite{Quiroga-Barranco2007}. In particular, the $C^*-$algebra generated by Toeplitz operators with essentially bounded symbols that are invariant under maximal abelian subgroups of $\Aut(\mathbb{B}^n)$ is commutative. Moreover, the authors in \cite{Dawson2015} used tools from representation theory to extend this result to bounded symmetric domains.

 In this work we will construct commuting families of $C^*-$algebras generated by the analytic continuation of the Toeplitz operators, and we will develop the restriction principal for suitable maximal abelian subgroups of $SU(n,1)$ which will allow us to find the spectral decomposition of the analytic continuation of the Toeplitz operators.
 
Let $\mathbb{B}^n$ be the unit ball in $\mathbb{C}^n$, and consider the weighted Bergman space \[A_{\lambda}^2(\mathbb{B}^n)=\lbrace f\in L^2(\mathbb{B}^n,d\mu_{\lambda})\mid f \text{ is holomorphic on } \mathbb{B}^n \rbrace\]
with respect to the probability measure
\[d\mu_{\lambda}(z)=\dfrac{\Gamma(\lambda)}{\pi ^n\Gamma(\lambda-n)}(1 - \vert z\vert ^2)^{\lambda-n-1}dz, \quad \lambda>n \]
the Toeplitz operator is the operator
\begin{align*}
    T_{\varphi}^{(\lambda)} : & \,A_{\lambda}^2(\mathbb{B}^n)\rightarrow A_{\lambda}^2(\mathbb{B}^n)\\
    & T_{\varphi}^{(\lambda)}=P_{\lambda}(\varphi f)
\end{align*}
where $P_{\lambda} : L^2(\mathbb{B}^n,d{\mu_{\lambda}) \rightarrow A_{\lambda}^2(\mathbb{B}^n)}$ is the orthogonal projection \[ P_{\lambda}f(x)=\int_{\mathbb{B}^n} f(y) K_{\lambda}(x,y) d\mu_{\lambda}(y)\]

This definition of the Toeplitz operator works when $\lambda>n$. However, the weighted Bergman spaces $A_{\lambda}^2(\mathbb{B}^n)$ reduces to $\lbrace 0 \rbrace$ when we consider $\lambda\leq n$. The kernel of $A_{\lambda}^2(\mathbb{B}^n)$ is given by \[ K_{\lambda}(z,w)=(1-z.\bar{w})^{-\lambda}\] So, a natural question would be, for which values of $\lambda$ is $K_{\lambda}$ a reproducing kernel for a Hilbert space of holomorphic functions on $\mathbb{B}^n$? We will denote this set by $W$. A half line contained in $W$ was found by Vergne and Rossi \cite{vergne1976analytic}. However, using algebraic methods Wallach determined the entire set $W$ by showing it also contains a discrete set of points \cite{wallach1979analytic}.

The authors in \cite{Chailuek2010}  constructed Hilbert spaces of holomorphic functions with reproducing kernel
\[K_{\lambda}(z,w)=(1-z.\bar{w})^{-\lambda} \] 
with weights $ \lambda>0$, and they proved these spaces coincide with the weighted Bergman spaces when we restrict the values of $ \lambda$ to $ \lambda>n$. 

In \cite{10.1007/BF02384872} the authors generalized the work of Chailuek and Hall that appeared in \cite{Chailuek2010}, and proved the existence of the analytic continuation of weighted Bergman spaces to include all weights $ \lambda$ in $\mathbb{R}-V$ where $ V$ a discrete set. These spaces are Sobolev spaces of holomorphic functions of certain order. Moreover, they proved the existence and uniqueness of the analytic continuation of the Toeplitz operators for certain classes of symbols.

\section{Analytic continuation of Toeplitz operators}

The definition of weighted Bergman spaces were extended to include the weights $\lambda>0$, where these spaces are Sobolev spaces of holomorphic functions. The construction of theses spaces was introduced in \cite{Chailuek2010}, we will present a brief description of these spaces.

Consider the Euler operator \[N:=\sum_{j=1}^{n} z_j\dfrac{\partial}{\partial z_j}\] then $N z^r=|r|z^r$ for all multi-indices $r$. For $\lambda>0$, choose a non-negative integer $m$ such that $\lambda+2m>n$. Denote by $\mathcal{O}(\mathbb{B}^n)$ the space of holomorphic functions on the unit ball. Define the space \[ A_{\lambda,\#}^2(\mathbb{B}^n):= \lbrace f\in\mathcal{O}( \mathbb{B}^n) \mid N^k f\in A_{\lambda}^2(\mathbb{B}^n, d\mu_{\lambda+2m}) \text{ for every } 0\leq k\leq m\, \rbrace \]
This space with inner product defined by \[<f,g>_{\lambda}:=<Af,Bg>_{\lambda+2m}\] is a reproducing kernel Hilbert space, and the kernel is given by \[ K_{\lambda}(x,y)=(1 - <x,y>)^{-\lambda} \]
The monomials $\lbrace z^r \rbrace$ forms an orthogonal basis for the space  $A_{\lambda,\#}^2(\mathbb{B}^n)$, and direct calculations show that \[ <z^r,z^s>_{\lambda,\#}\,=\,\delta(r,s) \dfrac{r!\, \Gamma(\lambda)}{\Gamma(\lambda+|r|)} \]
where $\delta(r,s)=1$ when $r=s$, and zero otherwise.

\begin{definition}\cite{10.1007/BF02384872}
Assume for $f\in \mathcal{O}(\overline{\mathbb{B}^n})$ the function $F_{\lambda,f}(z):=T_{\varphi}^{(\lambda)}f(z)$ can be analytically continued from $\lambda>n$ to some neighborhood of $\lambda_{0}>0$, and the operator $T: f \mapsto F_{\lambda,f}$ extends to a bounded operator on $A_{\lambda_0,\#}^2(\mathbb{B}^n)$, then we say the Toeplitz operator $T_{\varphi}^{(\lambda_0)}$ exists and we set $T= T_{\varphi}^{(\lambda_0)}$.
\end{definition}

Let $Z:=\lbrace \lambda\in\mathbb{C}-\lbrace n, n-1, n-2, n-3,...\rbrace \mid C(\lambda)=0\rbrace$, where \[C(\lambda):=\int_{\mathbb{B}^n}(1-|z|^2)^{\lambda-n-1}dz\] For $k=0,1,2,...$ define the space \[ BC^k(\mathbb{B}^n):= \lbrace f\in C^k(\Omega): \partial^\nu \overline{\partial^\mu} f\in L^{\infty}(\Omega) \text{ for  every }  |\nu|+|\mu|\leq k \rbrace \] 
  
  The following theorem shows that the existence of the analytic continuation of Toeplitz operators is guaranteed when the symbol $\varphi$ is in $BC^k(\Omega)$.
 
 \begin{theorem}[\cite{10.1007/BF02384872}]
 Let $k\ge 0$ be an integer. If $\varphi\in BC^k(\mathbb{B}^n)$, then $T_{\varphi}^{(\lambda)}$ exists for all $\lambda\in (n-k,\infty)-(Z\cup\lbrace-2,-3,-4,...\rbrace)$.
 \end{theorem}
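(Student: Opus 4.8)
The plan is to begin from the integral representation valid for $\lambda>n$,
\[
T_\varphi^{(\lambda)}f(x)=\frac{1}{C(\lambda)}\int_{\mathbb{B}^n}\varphi(y)\,f(y)\,(1-\langle x,y\rangle)^{-\lambda}(1-|y|^2)^{\lambda-n-1}\,dy,
\]
and to isolate the three sources of $\lambda$-dependence. The normalizing factor $1/C(\lambda)$ is meromorphic; indeed the beta integral gives $C(\lambda)=\pi^n\Gamma(\lambda-n)/\Gamma(\lambda)$, so that $1/C(\lambda)$ is a polynomial whose zeros will cancel some of the poles produced below, while the exceptional locus $Z$ records the zeros of $C(\lambda)$. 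The kernel $(1-\langle x,y\rangle)^{-\lambda}$ is, for fixed $x,y\in\mathbb{B}^n$, an entire function of $\lambda$, because $1-\langle x,y\rangle$ has positive real part. The only genuine obstruction is the weight $(1-|y|^2)^{\lambda-n-1}$, which is integrable against bounded data exactly when $\lambda>n$. For $f\in\mathcal{O}(\overline{\mathbb{B}^n})$ and fixed $x$, the factor $g_{x,f}(y):=\varphi(y)f(y)(1-\langle x,y\rangle)^{-\lambda}$ is of class $C^k$ on $\overline{\mathbb{B}^n}$ with bounded derivatives, by the hypothesis $\varphi\in BC^k(\mathbb{B}^n)$.

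Next I would continue the integral past $\lambda=n$ by the standard device of meromorphically continuing the family $s\mapsto\int_{\mathbb{B}^n}(1-|y|^2)^s\,g(y)\,dy$ through integration by parts in the radial direction, using the Euler operator $N$ and its conjugate $\overline N$ defined above. Since $(N+\overline N)(1-|y|^2)^{\lambda-n}$ equals $(\lambda-n)(1-|y|^2)^{\lambda-n-1}$ times a bounded smooth factor, each integration by parts trades one power of $(1-|y|^2)$ for one radial derivative of $g_{x,f}$, at the cost of a meromorphic constant carrying a simple pole. Carrying this out in the convergent range $\lambda>n$, where all boundary contributions vanish because the surviving power of $(1-|y|^2)$ is strictly positive, and iterating $k$ times, rewrites $T_\varphi^{(\lambda)}f(x)$ as $1/C(\lambda)$ times a meromorphic prefactor times an absolutely convergent integral with weight $(1-|y|^2)^{\lambda-n-1+k}$ paired against an expression built from at most $k$ derivatives of $g_{x,f}$. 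The new integral is holomorphic in $\lambda$ for $\lambda>n-k$, and this is exactly where the $BC^k$ regularity of $\varphi$ is consumed. This produces the required continuation of $x\mapsto F_{\lambda,f}(x)$ to the half-line $\lambda>n-k$, holomorphic except at the poles of the accumulated constants; combining these with the cancelling zeros of $1/C(\lambda)$ leaves the exceptional set $Z\cup\{-2,-3,-4,\dots\}$.

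Finally I would promote this pointwise continuation to the operator statement demanded by the definition, namely that $T\colon f\mapsto F_{\lambda,f}$ extends to a bounded operator on $A_{\lambda,\#}^2(\mathbb{B}^n)$. Here I would work on the dense subspace spanned by the monomials $\{z^r\}$, whose norms $\langle z^r,z^r\rangle_{\lambda,\#}=r!\,\Gamma(\lambda)/\Gamma(\lambda+|r|)$ are explicit and holomorphic in $\lambda$ on the admissible range, estimate the continued kernel by the $BC^k$-norm of $\varphi$ together with the reproducing-kernel bound for $A_{\lambda,\#}^2$, and invoke a Schur-type test to obtain an operator-norm bound that is locally uniform in $\lambda$. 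A density and limiting argument then yields a bounded operator on all of $A_{\lambda,\#}^2(\mathbb{B}^n)$ depending holomorphically on $\lambda$, which agrees with $T_\varphi^{(\lambda)}$ for $\lambda>n$ by uniqueness of analytic continuation; this is precisely the assertion that $T_\varphi^{(\lambda)}$ exists for $\lambda\in(n-k,\infty)-(Z\cup\{-2,-3,-4,\dots\})$.

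The main obstacle is this last passage from a pointwise to an operator-norm continuation. The integrations by parts are elementary, but the inner product of $A_{\lambda,\#}^2(\mathbb{B}^n)$ is itself twisted by powers of $N$, so controlling the continued operator requires matching the radial derivatives produced by integration by parts against the $N$-weighted norm and showing the resulting bound is locally uniform in $\lambda$ — which is what guarantees holomorphy of the operator-valued map and pins down the excluded set as exactly $Z\cup\{-2,-3,-4,\dots\}$ rather than a larger set of apparent poles. A secondary technical point is the careful verification that all boundary terms vanish throughout the continuation, which is the reason the integrations by parts must be performed first in the convergent regime $\lambda>n$ before the continuation in $\lambda$ is invoked.
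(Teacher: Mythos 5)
The paper does not actually prove this statement: it is quoted verbatim from the cited reference, so there is no in-paper argument to compare yours against. Judged on its own, the first half of your plan is the standard and correct route, and it is essentially the one used in the source: for fixed $x$ and $f\in\mathcal{O}(\overline{\mathbb{B}^n})$ one continues the scalar function $\lambda\mapsto T_\varphi^{(\lambda)}f(x)$ by repeated radial integration by parts, trading powers of $(1-|y|^2)$ for derivatives of $\varphi f$, which is exactly where the hypothesis $\varphi\in BC^k$ is spent and why the half-line $\lambda>n-k$ appears. (One small wrinkle you gloss over: $(N+\overline N)(1-|y|^2)^{s}=-2s\,|y|^2(1-|y|^2)^{s-1}$, so solving for the weight introduces a factor $|y|^{-2}$ that must be absorbed by the polar Jacobian $r^{2n-1}$; this is why the integration by parts is genuinely radial rather than a formal application of $N+\overline N$.)

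The genuine gap is the passage from this pointwise continuation to a bounded operator on $A_{\lambda,\#}^2(\mathbb{B}^n)$, which you correctly identify as the main obstacle but then only announce ("a Schur-type test", "a density and limiting argument") without carrying out. This is not a routine omission: for $\lambda\le n$ the boundedness of $T_\varphi^{(\lambda)}$ cannot follow from a Schur test on the continued kernel together with $\|\varphi\|_\infty$ alone, since mere boundedness of the symbol is not sufficient in the continued range --- the $k$ derivatives of $\varphi$ must enter the operator estimate itself, not just the existence of the scalar continuation. The argument in the cited source closes this by rewriting the continued sesquilinear form $\langle T_\varphi^{(\lambda)}f,g\rangle_{\lambda}$ as a finite sum of honest Toeplitz forms $\langle T_{\varphi_j}^{(\lambda+2m)}D_jf,E_jg\rangle_{\lambda+2m}$ on the classical-range space, with symbols $\varphi_j$ built from derivatives of $\varphi$ of order $\le k$ and with $D_j,E_j$ polynomials in the Euler operator $N$ that are controlled by the $\#$-norm; boundedness is then inherited from the case $\lambda+2m>n$. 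Your proposal never produces such a reduction, and without it the locally uniform operator-norm bound, the holomorphy of the operator-valued map, and the identification of the exceptional set as exactly $Z\cup\{-2,-3,-4,\dots\}$ (rather than some larger set of apparent poles) are all asserted rather than proved.
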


From now on, when we write $\lambda>0$, we mean $\lambda\in\mathbb{R}^{+}-\lbrace n,n-1,n-2,..., 0 \rbrace$.

\section{$C^{*}$-algebra Generated by Toeplitz Operators}
Representation theory can be used to construct commuting families of $C^*-$algebras generated by Toeplitz operators. For example, it was shown in \cite{Dawson2015} that certain multiplicity free representations corresponds to commuting families of Toeplitz operators with symbols admitting an invariant property. In particular, if $G$ is a Lie group of type I and $H\subset G$ is a compact subgroup, then the restriction of the holomorphic discrete series representation $\pi_{\lambda}|_H$ is multiplicity free if and only if the family of Toeplitz operators over $A_{\lambda}^2(\mathbb{B}^n)$ with $H$-invariant symbols is commutative \cite[Theorem 6.4]{Dawson2015}.
In this section we will construct commuting families of $C^*-$algebras, these families are generated by the analytic continuation of Toeplitz operators. In particular, we will extend \cite[Theorem 2]{Dawson2018} to include the analytic continuation case. Moreover, we will prove a density theorem for the $C^*-$algebra generated by the analytic continuation of Toeplitz operators. 

Let $G:=SU(n,1)$, and $H$ be a subgroup of $G$. 
  For $\lambda>0$, we consider the unitary representation of the group $SU(n,1)$ on $A_{\lambda}^2(\mathbb{B}^n)$, defined by \[ \pi_{\lambda}(g)f(z)=j_{\lambda}(g^{-1},z)f(g^{-1}.z)\] 
   where \[j_{\lambda}:SU(n,1)\times\mathbb{B}^n\rightarrow \mathbb{C}  \] \[ j_{\lambda} \left( \begin{pmatrix}
a & v \\ w^t & d 
\end{pmatrix},z \right)=(w^tz+d)^{-\lambda}\] The map $j_\lambda$ is lifted to $\widetilde{SU(n,1)}\times \mathbb{B}^n$ to make sure it is well defined for all $\lambda>0$.

\begin{proposition}
Let $\mathcal{L}(A_{\lambda}^2(\mathbb{B}^n))$ be the space of bounded linear operators on the weighted Bergman space $A_{\lambda}^2(\mathbb{B}^n)$. For every $\lambda>0$, the map 
\begin{align*}
    BC^n(\mathbb{B}^n) & \rightarrow \mathcal{L}(A_{\lambda}^2(\mathbb{B}^n))\\
    &\varphi \longmapsto T_{\varphi}^{(\lambda)}
\end{align*}
 is injective.
\end{proposition}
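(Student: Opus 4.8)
The map is linear, so injectivity is equivalent to the implication $T_\varphi^{(\lambda)}=0\Rightarrow\varphi=0$. The plan is to recover the symbol from the matrix coefficients of the operator against the orthogonal monomial basis $\{z^r\}$ of $A_{\lambda,\#}^2(\mathbb{B}^n)$. If $T_\varphi^{(\lambda)}=0$, then $\langle T_\varphi^{(\lambda)}z^r,z^s\rangle_{\lambda,\#}=0$ for all multi-indices $r,s$. For $\lambda>n$ this space coincides with the ordinary weighted Bergman space, and since $z^s$ is holomorphic and $P_\lambda$ is self-adjoint one has $\langle T_\varphi^{(\lambda)}z^r,z^s\rangle_{\lambda}=\langle\varphi z^r,z^s\rangle_{\lambda}=\int_{\mathbb{B}^n}\varphi(z)\,z^r\overline{z^s}\,d\mu_\lambda(z)$. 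First I would dispose of this range directly.

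For $\lambda>n$ the measure $\varphi\,d\mu_\lambda$ is a finite complex Borel measure on $\overline{\mathbb{B}^n}$ giving no mass to the sphere, and the hypothesis says it annihilates every $z^r\overline{z^s}$. By Stone--Weierstrass the functions $z^r\overline{z^s}$ generate a subalgebra that is uniformly dense in $C(\overline{\mathbb{B}^n})$, so $\varphi\,d\mu_\lambda=0$ and hence $\varphi\equiv0$. For $0<\lambda<n$ the operator exists only by analytic continuation, so instead I would analytically continue the coefficient itself. Writing $C(\lambda)=\pi^n\Gamma(\lambda-n)/\Gamma(\lambda)$, for $\lambda>n$ the coefficient equals $\tfrac{1}{C(\lambda)}\int_{\mathbb{B}^n}\varphi(z)\,z^r\overline{z^s}\,(1-|z|^2)^{\lambda-n-1}\,dz$, and by the defining property of $T_\varphi^{(\lambda)}$ together with the existence result of \cite{10.1007/BF02384872} its value at an admissible $\lambda$ is precisely the analytic continuation in $\lambda$ of this expression. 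The crucial observation is that this continuation is the pairing $\langle u_\varphi^{(\lambda)},\,z^r\overline{z^s}\rangle$, where $u_\varphi^{(\lambda)}=\tfrac{1}{C(\lambda)}(1-|z|^2)_+^{\lambda-n-1}\,\varphi$ is the Riesz--Gelfand distribution obtained by continuing the locally integrable weight $(1-|z|^2)^{\lambda-n-1}$. For admissible $\lambda$ (so that $C(\lambda)\neq0$ and $\lambda\notin\{n,n-1,\dots,0\}$) this distribution is well defined, and since $\lceil n-\lambda\rceil\le n$ regularizing subtractions suffice, it has order at most $n$. This is exactly why the hypothesis $\varphi\in BC^n(\mathbb{B}^n)$ is the right one, matching the range $\lambda>n-k$ with $k=n$ of the existence theorem.

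Granting this identification, the conclusion is immediate. The vanishing of all coefficients says that the order-$\le n$ distribution $u_\varphi^{(\lambda)}$ annihilates every monomial $z^r\overline{z^s}$. Polynomials in $z_1,\dots,z_n,\overline{z_1},\dots,\overline{z_n}$ are dense in $C^n(\overline{\mathbb{B}^n})$ in the $C^n$-topology, and a compactly supported distribution of order $\le n$ is continuous for that topology, so $u_\varphi^{(\lambda)}=0$. Restricting to the open ball, where $(1-|z|^2)_+^{\lambda-n-1}$ coincides with the smooth, strictly positive function $(1-|z|^2)^{\lambda-n-1}$, this reads $\tfrac{1}{C(\lambda)}(1-|z|^2)^{\lambda-n-1}\varphi(z)=0$ on $\mathbb{B}^n$, whence $\varphi\equiv0$.

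The main obstacle is the middle step: rigorously identifying the analytic continuation of the scalar function $\lambda\mapsto\langle T_\varphi^{(\lambda)}z^r,z^s\rangle$ with the distributional pairing $\langle u_\varphi^{(\lambda)},z^r\overline{z^s}\rangle$, and controlling the cancellation between the zeros of $1/C(\lambda)$ and the poles of $(1-|z|^2)_+^{\lambda-n-1}$ so that $u_\varphi^{(\lambda)}$ is a genuine pole-free distribution of order at most $n$ throughout the admissible set. I expect this is cleanest via the radial reduction $z=t\xi$ followed by integration by parts in $t$ (equivalently the weight shift $\lambda\mapsto\lambda+2m$ used to define $A_{\lambda,\#}^2(\mathbb{B}^n)$), which simultaneously realizes the continuation as a convergent integral and reads off its order; the bounded-derivative hypothesis on $\varphi$ is precisely what keeps the resulting boundary and finite-part terms finite. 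Once this bookkeeping is in place, the Stone--Weierstrass and $C^n$-density arguments treat the two ranges $\lambda>n$ and $0<\lambda<n$ uniformly.
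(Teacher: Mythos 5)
Your argument splits into two regimes, and they have very different status. For $\lambda>n$ the Stone--Weierstrass argument is complete and correct: $\varphi\,d\mu_\lambda$ is a finite complex measure on $\overline{\mathbb{B}^n}$, the span of the $z^r\overline{z^s}$ is a self-adjoint, point-separating, unital subalgebra of $C(\overline{\mathbb{B}^n})$, so annihilating all of them forces $\varphi\,d\mu_\lambda=0$ and hence $\varphi\equiv 0$. This is a cleaner justification of the final step than the paper gives. Note also that your framework correctly draws the conclusion from the value of the continued coefficient at the single admissible $\lambda_0$, rather than needing $T_\varphi^{(\lambda)}=0$ for a range of $\lambda$.

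The problem is the regime $0<\lambda\le n$, which is the only case where the proposition is not classical, and there your proof is not yet a proof. Everything hinges on the identification
\[
\An_{\lambda_0}\bigl(\langle T_\varphi^{(\lambda)}z^r,z^s\rangle_\lambda\bigr)\;=\;\langle u_\varphi^{(\lambda_0)},\,z^r\overline{z^s}\rangle
\]
with $u_\varphi^{(\lambda_0)}$ a genuine, pole-free distribution of order at most $n$ supported in $\overline{\mathbb{B}^n}$, and you explicitly defer this ("the main obstacle is the middle step"). Two things have to be checked and are not: first, that the matrix coefficient of the \emph{operator} $T_\varphi^{(\lambda_0)}$ in the $\#$-inner product (which itself depends on $\lambda_0$ through $A_{\lambda_0},B_{\lambda_0}$) really equals the scalar analytic continuation of the $\lambda>n$ integral formula; second, that the continued functional extends to a distribution of order $\le n$ that can legitimately be paired with $\varphi\cdot z^r\overline{z^s}$, where $\varphi$ is only in $BC^n$, not $C^\infty$ up to the boundary, and that the zeros of $1/C(\lambda)$ cancel the poles of $(1-|z|^2)_+^{\lambda-n-1}$ on the admissible set. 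Without these, nothing is established for $0<\lambda\le n$. For contrast, the paper does not split cases at all: it mollifies the symbol, uses weak-operator convergence $T_{\varphi_\epsilon}^{(\lambda)}\to T_\varphi^{(\lambda)}$ to reduce the matrix coefficient to $\langle\varphi z^r,z^s\rangle_\lambda$, and then concludes orthogonality to all monomial products forces $\varphi=0$ --- in effect assuming the same kind of integral representation of the continued coefficient that you are trying to prove. Your distributional route is more honest about where the difficulty lives and would, if completed via the radial reduction you sketch, yield a sharper statement explaining the role of the exponent $n$ in $BC^n$; but as written the sub-critical case is a program, not an argument.
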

\begin{proof}
    Fix $\lambda>0$. Suppose $\varphi\in BC^n(\mathbb{B}^n) $ and $T_{\varphi}^{(\lambda)}=0$, then $< T_{\varphi}^{(\lambda)}z^n,z^m>_{\lambda}=0$ for every monomials $z^n$ and $z^m$. There exists $\varphi_\epsilon\in C^{\infty}(\mathbb{B}^n)$ such that $\norm{\varphi_\epsilon}_{\infty}\leq \norm{\varphi}_\infty$, and $\varphi_{\epsilon}\rightarrow \varphi$ pointwise as $\epsilon\searrow 0$ . Therefore $T_{\varphi_\epsilon}^{(\lambda)}\rightarrow T_{\varphi}^{(\lambda)}$ in the weak operator topology. Since $\varphi_{\epsilon}$ is holomorphic, then $T_{\varphi_\epsilon}^{(\lambda)}$ is the multiplication operator.
     So we have:
     \begin{align*}
         0=<T_{\varphi}^{(\lambda)}z^n,z^m>_{\lambda}=&\lim_{\epsilon\rightarrow 0}<T_{\varphi_\epsilon}^{(\lambda)}z^n,z^m>_{\lambda}\\ 
         &=\lim_{\epsilon\rightarrow 0}<\varphi_\epsilon z^n, z^m>_{\lambda}\\
         &= <\varphi z^n,z^m>_{\lambda}
     \end{align*}
     which means $\varphi$ is orthogonal to the monomials on $\mathbb{B}^n$. Therefore, $\varphi=0$
\end{proof}

Let $ \varphi\in BC^n(\mathbb{B}^n)$ and $h\in\widetilde{SU(n,1)}$, define $\varphi_{h}(z):=\varphi(h^{-1}z)$. Let $H$ be a subgroup of $\widetilde{SU(n,1)}$, we say $\varphi$ is $H$-invariant if $\varphi_h(z)=\varphi(z)$ for every $h\in H$, and for a.e $z\in \mathbb{B}^n$.

 The following proposition is the cornerstone in finding commuting families of Toeplitz operators. In particular, we need the Toeplitz operators to commute with the representation $\pi_{\lambda}$, and this can be done by restricting the representation $\pi_{\lambda}$ to suitable subgroup of $\widetilde{G}$.
 The proof of the following proposition follows from \cite{Dawson2015} in the case where $\lambda>n$, and the uniqueness of the analytic continuation for the case $\lambda>0$.

\begin{proposition}
\label{Toeplitz operator commute with representation}
 Let $\lambda>0$. For every Toeplitz operator $T_{\varphi}^{(\lambda)}$ we have \[  \pi_{\lambda}(g)\circ T_{\varphi}^{(\lambda)}=T_{\varphi_g}^{(\lambda)}\circ \pi_{\lambda}(g)\]
\end{proposition}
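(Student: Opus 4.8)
The plan is to first establish the identity in the classical range $\lambda>n$, where $T_\varphi^{(\lambda)}=P_\lambda M_\varphi$ with $M_\varphi$ denoting multiplication by $\varphi$, and then to transport it to the remaining weights $0<\lambda\le n$ by analytic continuation in $\lambda$, exactly as the remark preceding the statement suggests.

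For $\lambda>n$ I would isolate two ingredients. First, a direct computation from the definitions of $\pi_\lambda$ and $\varphi_g$ gives the covariance of the multiplication operator: for $f\in L^2(\mathbb{B}^n,d\mu_\lambda)$,
\[
(\pi_\lambda(g)M_\varphi f)(z)=j_\lambda(g^{-1},z)\,\varphi(g^{-1}z)\,f(g^{-1}z)=\varphi_g(z)\,(\pi_\lambda(g)f)(z),
\]
so that $\pi_\lambda(g)M_\varphi=M_{\varphi_g}\pi_\lambda(g)$. Second, I would use that $\pi_\lambda$ extends to a unitary representation of $\widetilde{SU(n,1)}$ on all of $L^2(\mathbb{B}^n,d\mu_\lambda)$: the cocycle $j_\lambda$ and the transformation law of $d\mu_\lambda$ under the M\"obius action are arranged precisely so that the powers of the automorphy factor coming from $|j_\lambda|^2$, from the density $(1-|z|^2)^{\lambda-n-1}$, and from the holomorphic Jacobian all cancel. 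Since this representation leaves the holomorphic subspace $A_\lambda^2(\mathbb{B}^n)$ invariant, applying it to $g$ and to $g^{-1}$ shows $\pi_\lambda(g)A_\lambda^2(\mathbb{B}^n)=A_\lambda^2(\mathbb{B}^n)$, whence $\pi_\lambda(g)$ also preserves the orthogonal complement and therefore commutes with $P_\lambda$. Combining the two ingredients yields
\[
\pi_\lambda(g)T_\varphi^{(\lambda)}=\pi_\lambda(g)P_\lambda M_\varphi=P_\lambda\pi_\lambda(g)M_\varphi=P_\lambda M_{\varphi_g}\pi_\lambda(g)=T_{\varphi_g}^{(\lambda)}\pi_\lambda(g),
\]
which is the desired relation for $\lambda>n$.

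To pass to the range $0<\lambda\le n$, I would fix $g$ and a monomial $z^r$ (the monomials spanning a dense subspace of each $A_{\lambda,\#}^2(\mathbb{B}^n)$) and compare the two scalar functions $\lambda\mapsto(\pi_\lambda(g)T_\varphi^{(\lambda)}z^r)(z)$ and $\lambda\mapsto(T_{\varphi_g}^{(\lambda)}\pi_\lambda(g)z^r)(z)$. Both are holomorphic in $\lambda$ on the admissible domain: the factor $j_\lambda$ depends holomorphically on $\lambda$ once lifted to the universal cover, while the existence theorem for the analytic continuation of the Toeplitz operators supplies holomorphy of the Toeplitz part, provided I first verify that $\varphi_g\in BC^n(\mathbb{B}^n)$, which holds because $g$ acts as a biholomorphism of a neighborhood of $\overline{\mathbb{B}^n}$. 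Since the two families agree on the open set $\lambda>n$ by the previous step, uniqueness of analytic continuation forces them to coincide for all admissible $\lambda>0$, and evaluating on the dense span of monomials upgrades this to the operator identity.

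The main obstacle is this last step, and specifically the need to justify that composing with $\pi_\lambda(g)$ commutes with analytic continuation in $\lambda$. This reduces to confirming that $\pi_\lambda(g)$ is itself a $\lambda$-holomorphic family of operators which preserves the space $\mathcal{O}(\overline{\mathbb{B}^n})$ on which the continuation is defined pointwise, so that each side is a product of two holomorphic families in $\lambda$; the delicate bookkeeping lies in checking that $\pi_\lambda(g)z^r$ remains in the domain of $T_{\varphi_g}^{(\lambda)}$ and that the continuation produced by the existence theorem is genuinely the same object on both sides, after which uniqueness of the analytic continuation closes the argument.
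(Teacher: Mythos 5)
Your proposal is correct and follows essentially the same route as the paper, which simply notes that the identity holds for $\lambda>n$ by the standard computation from \cite{Dawson2015} (the covariance $\pi_\lambda(g)M_\varphi=M_{\varphi_g}\pi_\lambda(g)$ together with $\pi_\lambda(g)$ commuting with $P_\lambda$) and then extends to all admissible $\lambda>0$ by uniqueness of the analytic continuation. You supply more of the bookkeeping than the paper does --- in particular the verification that $\varphi_g\in BC^n(\mathbb{B}^n)$ and that both sides form holomorphic families in $\lambda$ when tested on monomials --- but the underlying argument is the same.
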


\begin{corollary}
\label{Generalized Toeplitz operator commute with representation}
Let $\widetilde{H}$ be a subgroup of $\widetilde{SU(n,1)}$. The symbol $\varphi$ is $\widetilde{H}$-invariant if and only if $T_{\varphi}^{( \lambda)}$ intertwine with $\pi_{\lambda}|_{\widetilde{H}}$ 
\end{corollary}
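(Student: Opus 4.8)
The plan is to derive both implications directly from the intertwining identity of Proposition \ref{Toeplitz operator commute with representation}, which asserts that $\pi_\lambda(g)\circ T_\varphi^{(\lambda)}=T_{\varphi_g}^{(\lambda)}\circ\pi_\lambda(g)$ for every $g\in\widetilde{SU(n,1)}$. Here I read the assertion that $T_\varphi^{(\lambda)}$ intertwines with $\pi_\lambda|_{\widetilde{H}}$ as the statement that $\pi_\lambda(h)\circ T_\varphi^{(\lambda)}=T_\varphi^{(\lambda)}\circ\pi_\lambda(h)$ for every $h\in\widetilde{H}$. Two auxiliary facts will carry the argument: each $\pi_\lambda(h)$ is invertible, since it is a unitary operator, and the assignment $\varphi\mapsto T_\varphi^{(\lambda)}$ is injective on $BC^n(\mathbb{B}^n)$ by the first Proposition of this section.

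For the forward direction, I would assume $\varphi$ is $\widetilde{H}$-invariant, so that $\varphi_h=\varphi$ for all $h\in\widetilde{H}$. Substituting $g=h$ into the identity of Proposition \ref{Toeplitz operator commute with representation} and using $\varphi_h=\varphi$ gives $\pi_\lambda(h)\circ T_\varphi^{(\lambda)}=T_{\varphi_h}^{(\lambda)}\circ\pi_\lambda(h)=T_\varphi^{(\lambda)}\circ\pi_\lambda(h)$, which is exactly the intertwining relation. This half is purely formal.

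For the converse, I would suppose $T_\varphi^{(\lambda)}$ intertwines with $\pi_\lambda|_{\widetilde{H}}$, so that $\pi_\lambda(h)\circ T_\varphi^{(\lambda)}=T_\varphi^{(\lambda)}\circ\pi_\lambda(h)$ for every $h\in\widetilde{H}$. Comparing this with the identity of Proposition \ref{Toeplitz operator commute with representation} yields $T_{\varphi_h}^{(\lambda)}\circ\pi_\lambda(h)=T_\varphi^{(\lambda)}\circ\pi_\lambda(h)$; cancelling the invertible operator $\pi_\lambda(h)$ on the right gives $T_{\varphi_h}^{(\lambda)}=T_\varphi^{(\lambda)}$ for every $h\in\widetilde{H}$. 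Invoking the injectivity of $\varphi\mapsto T_\varphi^{(\lambda)}$ then forces $\varphi_h=\varphi$ for all $h$, i.e. $\varphi$ is $\widetilde{H}$-invariant.

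The only genuinely delicate step is this last use of injectivity in the converse: the equality of the two Toeplitz operators must be upgraded to an equality of symbols, and this is precisely where the hypothesis $\varphi\in BC^n(\mathbb{B}^n)$ is needed, so that the first Proposition applies. Everything else reduces to formal manipulation of the intertwining identity together with the bijectivity of $\pi_\lambda(h)$.
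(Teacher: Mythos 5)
Your argument is correct and is exactly the (unwritten) proof the paper intends: the corollary is stated without proof as an immediate consequence of Proposition \ref{Toeplitz operator commute with representation} together with the injectivity of $\varphi\mapsto T_{\varphi}^{(\lambda)}$, which is precisely the route you take. The only point worth making explicit in your converse is that $\varphi_h-\varphi$ again lies in $BC^n(\mathbb{B}^n)$ (so that injectivity, applied via linearity to $T_{\varphi_h-\varphi}^{(\lambda)}=0$, is legitimate); this holds because $h$ acts by a biholomorphism of $\mathbb{B}^n$ whose derivatives are bounded on the ball.
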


Multiplicity free representations corresponds to commuting families of Toeplitz operators, we will show this also remains true when we consider the analytic continuation case. Let $H$ be a Lie group of type I, and  $\pi:H\rightarrow \mathcal{L(H)}$ be a unitary representation of $H$ on a Hilbert space  $\mathcal{H}$. The representation $\pi$ admits a unique direct integral decomposition (up to unitary equivalence) \[ \pi\simeq \int_{\widehat{H}}^{\bigoplus}m_{\pi}(\tau)\tau d\mu(\tau) \] where $\widehat{H}$ is the set of equivalence classes of irreducible unitary representation of $H$, $\mu$ is a Borel measure on $\widehat{H}$, and $m_{\pi}:\widehat{H}\rightarrow \mathbb{N}\cup \lbrace\infty\rbrace $ is the multiplicity function \cite{Kobayashi2005}. A unitary representation $\pi$ is called multiplicity free if $m_{\pi}(\tau)\leq 1$ for almost all $\tau\in\widehat{H}$.

Let $\End_H(\mathcal{H})$ be the set of operators that intertwine with the representation $\pi$, the ring $\End_H(\mathcal{H})$ is commutative if and only if the representation $\pi$ is multiplicity free \cite{Kobayashi2005}. Let ${H}$ be a subgroup of ${SU(n,1)}$, and $BC^n(\mathbb{B}^n)^{{H}}$ be the set of symbols in $BC^n(\mathbb{B})^n$ that are ${H}$-invariant. We will use $\mathcal{T}^{( \lambda)}(BC^n(\mathbb{B}^n)^H)$ to denote the $C^*$-algebra generated by Toeplitz operators with $H-$ invariant symbols on the Hilbert space $A_{\lambda}^2(\mathbb{B}^n)$.

\begin{lemma}[\cite{10.1007/BF02384872}]
Let $\Omega$ be a domain in $\mathbb{C}^n$ and $F(x,y)$ be defined on $\Omega\times \bar{\Omega}$ such that $F$ is holomorphic in first variable and anti-holomorphic in the second variable. If $F(x,\bar{x})=0$ for every $x$ in $\Omega$, then $F$ is identically zero on $\Omega\times \bar{\Omega}$.
\end{lemma}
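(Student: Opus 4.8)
The plan is to establish the statement locally by extracting Taylor coefficients and then to propagate it globally with the identity theorem, the whole argument turning on the anti-holomorphic dependence in the second slot. Since $\Omega$ is a domain and $F$ is holomorphic in $x$ and anti-holomorphic in $y$, $F$ is real-analytic on the connected set $\Omega\times\bar\Omega$; hence it is enough to show that $F$ vanishes on a neighborhood of one point $(x_0,\bar x_0)$, after which the identity theorem forces $F\equiv 0$ everywhere on $\Omega\times\bar\Omega$. So I would fix $x_0\in\Omega$ and work on a small polydisc centered at $(x_0,\bar x_0)$.

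On that polydisc I would write the joint Taylor expansion
\[
F(x,y)=\sum_{\alpha,\beta\in\mathbb{N}^n} c_{\alpha\beta}\,(x-x_0)^{\alpha}\,(y-\bar x_0)^{\beta},
\]
which is legitimate because $F$ is holomorphic in $x$ and, read as a function of the point $y\in\bar\Omega$, holomorphic in $y$ as well (this is what anti-holomorphicity in the second variable means). Restricting to the anti-diagonal $y=\bar x$ and using $(\bar x-\bar x_0)^{\beta}=\overline{(x-x_0)}^{\,\beta}$ together with the hypothesis $F(x,\bar x)=0$, I obtain
\[
0=\sum_{\alpha,\beta\in\mathbb{N}^n} c_{\alpha\beta}\,(x-x_0)^{\alpha}\,\overline{(x-x_0)}^{\,\beta}
\]
for every $x$ near $x_0$. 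The coefficients are then recovered by the Wirtinger calculus: applying $\partial^{\gamma}\overline{\partial^{\delta}}$ and evaluating at $x_0$ kills every term except the one with $(\alpha,\beta)=(\gamma,\delta)$, which gives $\gamma!\,\delta!\,c_{\gamma\delta}=0$. Hence all $c_{\alpha\beta}$ vanish, $F\equiv 0$ on the polydisc, and the identity theorem finishes the argument.

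The hard part, and the only place the hypotheses are genuinely used, is the linear independence of the mixed monomials $\{(x-x_0)^{\alpha}\overline{(x-x_0)}^{\beta}\}$, equivalently the fact that the anti-diagonal $\{(x,\bar x):x\in\Omega\}$ is a uniqueness set (a maximal totally real submanifold) for $F$. This is exactly where anti-holomorphicity is indispensable: if $F$ were holomorphic rather than anti-holomorphic in the second variable, the diagonal restriction would return the purely holomorphic monomials $(x-x_0)^{\alpha+\beta}$, which satisfy nontrivial linear relations, and the conclusion would fail. The conjugation separates the holomorphic and anti-holomorphic degrees, and verifying this separation (through the Wirtinger derivatives above) is the crux; the localization and the final propagation are routine.
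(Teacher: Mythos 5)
The paper does not actually prove this lemma: it is imported verbatim from the reference \cite{10.1007/BF02384872} (Engli\v{s}'s density paper), where the standard argument is that the anti-diagonal $\{(x,\bar x):x\in\Omega\}$ is a maximal totally real submanifold of $\Omega\times\bar\Omega$ and hence a uniqueness set for functions holomorphic there. Your proof is correct and amounts to an explicit, coefficient-level verification of exactly that fact: you localize, expand $F$ as a double power series, restrict to $y=\bar x$, and kill the coefficients with Wirtinger derivatives, which is legitimate because the monomials $(x-x_0)^{\alpha}\overline{(x-x_0)}^{\beta}$ are linearly independent (termwise differentiation being justified by the absolute and locally uniform convergence of the double series on a slightly smaller polydisc). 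Two small points are worth making explicit. First, the hypothesis is only separate holomorphy in each slot, so the joint Taylor expansion you write down needs Hartogs' theorem (or an implicit joint-regularity assumption) before the identity theorem on the connected set $\Omega\times\bar\Omega$ applies; this is standard but should be said. Second, you correctly resolve the ambiguity in the phrase ``anti-holomorphic in the second variable'': the reading that makes the lemma true, and the one consistent with its use in the proof of Lemma 3.2 where $F(x,y)=\sum u_{ij}f_i(x)\overline{g_j(\bar y)}$, is that $F$ is holomorphic in the point $y\in\bar\Omega$, so that the restriction to $y=\bar x$ produces the conjugate monomials rather than $(x-x_0)^{\alpha+\beta}$; your closing remark that the purely holomorphic version fails (e.g.\ $x-y$ vanishes on the diagonal) pinpoints where the hypothesis is used.
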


The following lemma appeared in the proof of Theorem 2 in \cite{10.1007/BF02384872}. We will need it to show that the space of  Toeplitz operators with $H-$invariant symbols is dense in $\End_{\widetilde{H}}(A_{\lambda}^2(\mathbb{B}^n))$.
\begin{lemma}
\label{zero on the diagonal}
Let $q$ and $r$ be integers, and  $W$ be a finite dimensional subspace of $A_{\lambda}^2(\mathbb{B}^n)$ of dimension $qr$. Assume $W$ has basis $\lbrace f_1,...,f_q,g_1,...,g_r \rbrace$. If $u\in \mathbb{C}^{q\times r}$ is such that \[ \sum_{j}\sum_{i} <T_{\varphi}^{(\lambda)}f_i,g_j>u_{ij}=0 \] for every $\varphi\in BC^n(\mathbb{B}^n)$, then $u=0$.
\end{lemma}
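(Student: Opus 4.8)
The plan is to collapse the hypothesis into a single statement about the continuous function
\[
\Phi(z):=\sum_{i,j}u_{ij}\,f_i(z)\,\overline{g_j(z)}
\]
on $\mathbb{B}^n$, show that $\Phi\equiv 0$, and then recover $u=0$ by polarization together with the preceding lemma. The first step is the integral representation
\[
\sum_j\sum_i u_{ij}\,\langle T_{\varphi}^{(\lambda)}f_i,g_j\rangle_{\lambda}
=\frac{\Gamma(\lambda)}{\pi^n\Gamma(\lambda-n)}\int_{\mathbb{B}^n}\varphi(z)\,\Phi(z)\,(1-|z|^2)^{\lambda-n-1}\,dz .
\]
For $\lambda>n$ this is immediate: since $P_\lambda$ is the orthogonal projection and each $g_j\in A_\lambda^2(\mathbb{B}^n)$, one has $\langle T_\varphi^{(\lambda)}f_i,g_j\rangle_\lambda=\langle\varphi f_i,g_j\rangle_\lambda=\int_{\mathbb{B}^n}\varphi f_i\overline{g_j}\,d\mu_\lambda$, and summing against $u_{ij}$ gives the displayed identity.

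To reach a general admissible $\lambda$ I would restrict the test symbols to $\varphi\in C_c^\infty(\mathbb{B}^n)\subset BC^n(\mathbb{B}^n)$. For such $\varphi$ the right-hand side is an \emph{entire} function of $\lambda$, because the integrand is compactly supported in the open ball where the weight $(1-|z|^2)^{\lambda-n-1}$ is smooth; hence it coincides with its own analytic continuation. By the very definition of the analytic continuation of $T_\varphi^{(\lambda)}$, the left-hand side is the analytic continuation in $\lambda$ of the $\lambda>n$ matrix elements, so the identity persists for every admissible $\lambda$. The normalizing factor $\Gamma(\lambda)/(\pi^n\Gamma(\lambda-n))=1/C(\lambda)$ is finite and nonzero precisely because $\lambda\in\mathbb{R}^+-\{n,n-1,\dots,0\}$. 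Feeding in the hypothesis, the left-hand side vanishes for every such $\varphi$; since the weight is strictly positive on $\mathbb{B}^n$ and $\Phi$ is continuous, letting $\varphi$ run over $C_c^\infty(\mathbb{B}^n)$ forces $\Phi\equiv 0$ on $\mathbb{B}^n$.

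Next I would polarize. Define the sesquiholomorphic function $F(x,y):=\sum_{i,j}u_{ij}\,f_i(x)\,\overline{g_j(y)}$, which is holomorphic in $x$ and anti-holomorphic in $y$ and satisfies $F(z,z)=\Phi(z)=0$ for all $z$. By the preceding lemma (vanishing on the diagonal forces a sesquiholomorphic function to vanish identically), $F\equiv 0$ on $\mathbb{B}^n\times\overline{\mathbb{B}^n}$. Finally, since $\{f_1,\dots,f_q,g_1,\dots,g_r\}$ is a basis of $W$, the families $\{f_i\}$ and $\{g_j\}$ are each linearly independent, and hence so is $\{\overline{g_j}\}$. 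Fixing $x$ in $F(x,y)=0$ and using independence of $\{\overline{g_j}\}$ yields $\sum_i u_{ij}f_i(x)=0$ for every $j$ and every $x$; independence of $\{f_i\}$ then gives $u_{ij}=0$, i.e.\ $u=0$.

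The main obstacle is the passage to the analytic-continuation regime $\lambda\le n$, where $d\mu_\lambda$ is no longer a positive finite measure and the integral identity is only a regularized one. Restricting the test symbols to $C_c^\infty(\mathbb{B}^n)$ is exactly what makes the right-hand side a genuine, entire integral equal to its continuation, and verifying that $1/C(\lambda)$ remains finite and nonzero on the admissible parameter set is what allows dividing it out; these are the two points that require care. The polarization and the concluding linear-algebra step are then routine.
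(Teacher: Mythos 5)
Your proof is correct and follows essentially the same route as the paper's: pass to the integral identity $\int\varphi\,\Phi\,d\mu_\lambda=0$, conclude $\Phi\equiv 0$, polarize via the sesquiholomorphic vanishing lemma, and finish with linear independence of the $f_i$ and $g_j$. The only difference is that you justify the integral identity in the regime $\lambda\le n$ (by testing against $\varphi\in C_c^\infty(\mathbb{B}^n)$, noting that $\Gamma(\lambda)/\Gamma(\lambda-n)=(\lambda-1)\cdots(\lambda-n)$ makes the right-hand side entire in $\lambda$, and invoking uniqueness of analytic continuation), a step the paper asserts without comment; this is a welcome tightening rather than a different argument.
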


\begin{proof}  
Suppose we have $u\in\mathbb{C}^{q\times r}$ such that \[ \sum_{j}\sum_{i} <T_{\varphi}^{(\lambda)}f_i,g_j>u_{ij}=0 \] for every $\varphi$. Then \[ \int_{\Omega} \varphi(z) \sum_j\sum_i u_{ij}f_i(z)\overline{g_j(z)}.d\mu(z)=0 \] for all $\varphi\in BC^n(\mathbb{B}^n)$. Therefore, we have
\begin{equation}
  \sum_j\sum_i u_{ij}f_i(z)\overline{g_j(z)}=0  
\end{equation} almost everywhere on $\Omega$. Since the left hand side of equation 3.1 is continuous, then the equation holds everywhere on $\Omega$. Define the function \[ F(x,y)=\sum_j\sum_i u_{ij}f_i(x)\overline{g_j(\overline{y})} \] then $F(z,\bar{z})=0$ on $\Omega$. Therefore, the function $F$ is zero on $\Omega\times \overline{\Omega}$. Since the functions $f_i's$ and $g_i's$ are linearly independent, then $u_{ij}=0$ for every $i,j$. 
\end{proof}

 For $\lambda>n$, the $C^*-$algebra $\mathcal{T}^{(\lambda)}(BC^n(\mathbb{B}^n)^{{H}})$ generated by Toeplitz operators with $H$- invariant symbols, is dense in $\End_{\widetilde{H}}(A_{\lambda}^2(\mathbb{B}^n))$ under the strong operator topology, the following proposition shows that this density result still holds for the analytic continuation of Toeplitz operators with $H$- invariant symbols, the proof is close to the proof of \cite[Theorem 2]{10.1007/BF02384872}. If $F(\lambda,x)$ is a function that depends on $\lambda$, then we will use the notation $\An_{\lambda_0}(F)$ to denote the analytic continuation of the function $F$ to a neighborhood of $\lambda_0$.

\begin{proposition}\label{generalized Englis density theorem}
 Let $H$ be a compact subgroup of $SU(n,1)$. For every $\lambda>0$, the space $\mathcal{T}^{(\lambda)}(BC^n(\mathbb{B}^n)^{H})$ is dense in $\End_{\widetilde{H}}(A_{\lambda}^2(\mathbb{B}^n))$ the space of $\widetilde{H}$-intertwining operators on $A_{\lambda}^2(\mathbb{B}^n)$ under the strong operator topology.
 \end{proposition}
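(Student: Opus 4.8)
The plan is to realize the $H$-invariant Toeplitz operators as the image of \emph{all} Toeplitz operators under the canonical averaging projection onto the commutant, and to transport a density statement for the latter through this projection. First I would record the easy containment: by Corollary~\ref{Generalized Toeplitz operator commute with representation} every $T_\varphi^{(\lambda)}$ with $\varphi$ being $H$-invariant intertwines $\pi_\lambda|_{\widetilde H}$, so $\mathcal{T}^{(\lambda)}(BC^n(\mathbb B^n)^H)\subseteq \End_{\widetilde H}(A_\lambda^2(\mathbb B^n))=:\mathcal M$, and since $\mathcal M=\pi_\lambda(\widetilde H)'$ is a von Neumann algebra it is SOT-closed; thus only the reverse density remains. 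The set $\mathcal V:=\{T_\varphi^{(\lambda)}:\varphi\in BC^n(\mathbb B^n)^H\}$ is in fact a linear subspace, since $c_1\varphi_1+c_2\varphi_2$ is again $H$-invariant, hence convex, so its SOT- and WOT-closures coincide, and it suffices to prove that $\mathcal V$ is WOT-dense in $\mathcal M$.

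Step one is to show that the full family $\{T_\varphi^{(\lambda)}:\varphi\in BC^n(\mathbb B^n)\}$ is WOT-dense in all of $\mathcal L(A_\lambda^2(\mathbb B^n))$. Given $X_0$, finitely many vectors $u_1,\dots,u_m,v_1,\dots,v_m$, and $\epsilon>0$, I would (after discarding dependencies so that the chosen vectors are linearly independent) apply Lemma~\ref{zero on the diagonal} with $f_i=u_i$ and $g_j=v_j$: the lemma says the linear map $\varphi\mapsto(\langle T_\varphi^{(\lambda)}u_i,v_j\rangle)_{ij}$ has trivial annihilator, hence is surjective onto $\mathbb C^{m\times m}$, so one may choose $\varphi\in BC^n(\mathbb B^n)$ with $\langle T_\varphi^{(\lambda)}u_k,v_k\rangle=\langle X_0u_k,v_k\rangle$ for all $k$. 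This is exactly a WOT-approximation of $X_0$, giving the claimed density.

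Step two is the averaging. Since the conjugation $X\mapsto\pi_\lambda(\tilde h)X\pi_\lambda(\tilde h)^{-1}$ depends on $\tilde h\in\widetilde H$ only through its image $h$ in the compact group $H$ (the covering ambiguity being a scalar that cancels), I would define $E(X)=\int_H\pi_\lambda(h)X\pi_\lambda(h)^{-1}\,dh$ against normalized Haar measure on $H$. A routine check using invariance of Haar measure shows $E$ is a normal, unital, contractive projection of $\mathcal L(A_\lambda^2(\mathbb B^n))$ onto $\mathcal M$ with $E|_{\mathcal M}=\id$, so in particular $E$ is surjective onto $\mathcal M$. By Proposition~\ref{Toeplitz operator commute with representation}, $\pi_\lambda(h)T_\varphi^{(\lambda)}\pi_\lambda(h)^{-1}=T_{\varphi_h}^{(\lambda)}$, and moving the Bochner integral inside the linear assignment $\psi\mapsto T_\psi^{(\lambda)}$ gives $E(T_\varphi^{(\lambda)})=T_{\varphi^H}^{(\lambda)}$, where $\varphi^H=\int_H\varphi_h\,dh\in BC^n(\mathbb B^n)^H$. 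Thus $E$ carries the family of all Toeplitz operators into $\mathcal V$. Using that the full Toeplitz family is WOT-dense in $\mathcal L(A_\lambda^2(\mathbb B^n))$ together with continuity and surjectivity of $E$, I obtain $\mathcal M=E(\mathcal L(A_\lambda^2(\mathbb B^n)))\subseteq\overline{E(\{T_\varphi^{(\lambda)}\})}^{\mathrm{WOT}}\subseteq\overline{\mathcal V}^{\mathrm{WOT}}$, which together with $\overline{\mathcal V}^{\mathrm{WOT}}\subseteq\mathcal M$ yields the desired density.

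I expect the main obstacle to be the rigorous transport of density through $E$: a WOT-convergent net need not be norm-bounded, so the naive continuity of $E$ is delicate, and I would circumvent this by invoking the Kaplansky density theorem to reduce all approximations to the unit ball, on which WOT agrees with the ultraweak topology and $E$ is genuinely continuous by normality. A secondary point requiring care is that Lemma~\ref{zero on the diagonal}, Proposition~\ref{Toeplitz operator commute with representation}, and the identity $E(T_\varphi^{(\lambda)})=T_{\varphi^H}^{(\lambda)}$ all persist in the analytic-continuation range $\lambda>0$; these hold because each is available at fixed $\lambda>0$ through the uniqueness of the analytic continuation, so the entire argument is uniform in $\lambda$.
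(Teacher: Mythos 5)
Your overall strategy is the same one that underlies the paper's proof: use Lemma~\ref{zero on the diagonal} to show that the matrix-coefficient map $\varphi\mapsto(\langle T_\varphi^{(\lambda)}u_i,v_j\rangle)_{ij}$ is surjective, then average over the compact group to land in the $H$-invariant symbols, and finally pass from WOT to SOT density by convexity. The packaging differs: the paper works compression-by-compression on finite-dimensional $\widetilde H$-invariant subspaces $W$, whereas you set up a global normal conditional expectation $E(X)=\int_H\pi_\lambda(h)X\pi_\lambda(h)^{-1}\,dh$ onto the commutant and push the density of the full Toeplitz family through it. Your version has the merit of making explicit the averaging step $E(T_\varphi^{(\lambda)})=T_{\varphi^H}^{(\lambda)}$ that actually produces an $H$-invariant symbol --- a step the paper's written proof silently omits (its operator $R$ is defined on all of $BC^n(\mathbb B^n)$ and the proof never returns to the invariant subclass). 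Both arguments also share the same soft spot at $\lambda_0\le n$: Lemma~\ref{zero on the diagonal} is proved via the integral formula $\langle T_\varphi f_i,g_j\rangle=\int\varphi f_i\overline{g_j}\,d\mu_\lambda$, valid only for $\lambda>n$, and the passage to the continued parameter is justified in both places by an appeal to ``uniqueness of analytic continuation'' that deserves more detail (vanishing of the continuation at the single point $\lambda_0$ does not by itself force vanishing of the function of $\lambda$).

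The one concrete gap is your use of the Kaplansky density theorem to control the transport of WOT-density through $E$. Kaplansky's theorem applies to $*$-subalgebras, and $\{T_\varphi^{(\lambda)}:\varphi\in BC^n(\mathbb B^n)\}$ is a self-adjoint \emph{linear subspace} but not a subalgebra (a product of Toeplitz operators is not Toeplitz), so you are not entitled to norm-bounded approximating nets, and the normality of $E$ cannot be brought to bear as stated. The clean repair is to avoid limits altogether and argue as the paper does, but with your averaging inserted where the paper needs it: for a finite-dimensional $\widetilde H$-invariant subspace $W$ the projection $P_W$ commutes with $\pi_\lambda|_{\widetilde H}$, hence $P_WT_{\varphi^H}^{(\lambda)}P_W=E(P_WT_\varphi^{(\lambda)}P_W)=E(P_WXP_W)=P_WXP_W$ exactly, with no continuity of $E$ invoked; one then concludes WOT-density by letting $W$ run over the invariant subspaces, which is precisely the final step of the paper's proof. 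With that substitution your argument closes, and in fact supplies the justification the paper's own proof leaves implicit.
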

 \begin{proof}
   Since $H$ is compact, then $A_{\lambda}^2(\mathbb{B}^n)$ decomposes into an orthogonal finite dimensional $\widetilde{H}-$invariant subspaces.  Let $\lambda>0$ and $W$ be a $\widetilde{H}-$invariant finite dimensional subspace of $A_{\lambda_0}^2(\mathbb{B}^n)$ with basis $f_1,...,f_q,g_1,...,g_r$. Let $T$ be a bounded operator on $A_{\lambda_0}^2(\mathbb{B}^n)$, we will show that there exists $\varphi\in BC^n(\mathbb{B}^n)$ such that \[ <Tf_i,g_j>\,\, =\,\, <T_{\varphi}^{(\lambda_0)}f_i,g_j>  \]
 
 Define the operator \[ R:BC^n(\mathbb{B}^n)\rightarrow \mathbb{C}^{q\times r} \] \[ (R\varphi)_{ij}:=\,\, <T_{\varphi}^{(\lambda_0)}f_i,g_j> \] which can be written as  \[ (R\varphi)_{ij} =\An_{\lambda_0}(<T_{\varphi}^{(\lambda)}f_i,g_j>) \]
 Suppose $u\in\mathbb{C}^{q\times r}$ is orthogonal to the image of the operator $R$, that is   \[ \sum_{j}\sum_{i} <T_{\varphi}^{(\lambda_0)}f_i,g_j>\overline{u_{ij}}=0 \] 
  for every $\varphi\in BC^n(\mathbb{B}^n)$. So, we have  \[\An_{\lambda_0} ( \sum_{j}\sum_{i} <T_{\varphi}^{(\lambda)}f_i,g_j>\overline{u_{ij}})=0 \]
 which implies \[ \sum_{j}\sum_{i} <T_{\varphi}^{(\lambda)}f_i,g_j>\overline{u_{ij}}=0 \]
 Therefore, by Lemma \ref{zero on the diagonal} we have $u_{ij}=0$. Which means that the image of $R$ is $\mathbb{C}^{q\times r}$.

 Since $W$ was an arbitrary finite dimensional $\widetilde{H}-$invariant subspace, then $\mathcal{T}^{(\lambda)}(BC^n(\mathbb{B}^n)^{{H}})$ is dense in $\End_{\widetilde{H}}(A_{\lambda}^2(\mathbb{B}^n))$ in the weak operator topology. Since $\mathcal{T}^{(\lambda)}(BC^n(\mathbb{B}^n)^H)$ is a convex subspace of $\End_{\widetilde{H}}(A_{\lambda}^2(\mathbb{B}^n))$, then $\mathcal{T}^{(\lambda)}(BC^n(\mathbb{B}^n)^H)$ is dense in $\End_{\widetilde{H}}(A_{\lambda}^2(\mathbb{B}^n))$ in the strong operator topology.
  \end{proof}

One of our main goals is to establish a connection between the commutativity of a subspace of analytic continuation of Toeplitz operators and multiplicity free representations. First, we need the following Lemma. The proof can be found in \cite[Lemma 6.3]{Dawson2015}.

\begin{lemma}
    \label{commutitivity of dense subspace}
    Suppose that $V$ is a linear subspace of the space $\mathcal{L}(\mathcal{H})$ of bounded linear operators on a Hilbert space $\mathcal{H}$. If $V$ consists of operators that commutes, then the closure of $V$ in the strong operator topology also consists of operators that commutes.
\end{lemma}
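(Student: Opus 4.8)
The plan is to exploit the fact that, although operator multiplication is not jointly continuous in the strong operator topology, multiplication by a \emph{fixed} operator is. Concretely, for any fixed $S\in\mathcal{L}(\mathcal{H})$ the left-multiplication map $L_S:T\mapsto ST$ and the right-multiplication map $R_S:T\mapsto TS$ are both strongly continuous: if $T_\alpha\to T$ strongly then $(ST_\alpha)x=S(T_\alpha x)\to S(Tx)$ by boundedness of $S$, while $(T_\alpha S)x=T_\alpha(Sx)\to T(Sx)$ by applying strong convergence of $T_\alpha$ to the vector $Sx$.

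First I would record the consequence that the one-operator commutant $\lbrace S\rbrace':=\lbrace T\in\mathcal{L}(\mathcal{H}):ST=TS\rbrace$ is strongly closed for every $S$. Indeed $\lbrace S\rbrace'$ is exactly the kernel of the linear map $L_S-R_S$, which is strongly continuous by the previous observation; as the preimage of the closed set $\lbrace 0\rbrace$, it is strongly closed.

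Next I would run a two-step closure argument, writing $\overline{V}$ for the strong closure of $V$. Because $V$ is commutative, for each $S\in V$ we have $V\subseteq\lbrace S\rbrace'$, and since $\lbrace S\rbrace'$ is strongly closed this forces $\overline{V}\subseteq\lbrace S\rbrace'$; in words, every element of $\overline{V}$ commutes with every element of $V$. Now fix any $B\in\overline{V}$. What was just shown means $S\in\lbrace B\rbrace'$ for every $S\in V$, i.e. $V\subseteq\lbrace B\rbrace'$; invoking once more that $\lbrace B\rbrace'$ is strongly closed yields $\overline{V}\subseteq\lbrace B\rbrace'$. Hence every $A\in\overline{V}$ commutes with $B$, and as $B\in\overline{V}$ was arbitrary, $\overline{V}$ consists of mutually commuting operators.

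The only genuine subtlety, and the reason one cannot simply pass to the limit in a relation of the form $A_\alpha B_\beta=B_\beta A_\alpha$, is that strong-operator multiplication fails to be jointly continuous, so the two factors cannot be sent to their limits simultaneously. The device that sidesteps this is to use only separate continuity and to close $V$ up twice: first against the fixed operators of $V$, and then against a fixed operator of $\overline{V}$.
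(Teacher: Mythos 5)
Your argument is correct: the separate (but not joint) strong continuity of multiplication, the resulting strong closedness of one-operator commutants, and the two-step closure of $V$ against $V$ and then against $\overline{V}$ together give exactly the claim. The paper itself omits the proof and defers to \cite[Lemma 6.3]{Dawson2015}, and your argument is the standard one underlying that reference, so there is nothing to add.
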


\begin{theorem}
\label{multiplicity free and commutativity}
Let $\lambda >0$.
\begin{itemize}
\item[\rm 1)] If $\End_{\widetilde{H}}(A_{\lambda}^2(\mathbb{B}^n))$ is commutative, then $\mathcal{T}^{(\lambda)}(BC^n(\mathbb{B}^n)^H)$ is a commutative $C^*$-algebra.
\item[\rm 2)] If $\widetilde{H}$ is compact, then $\mathcal{T}^{(\lambda)}(BC^n(\mathbb{B}^n)^H)$ is  commutative if and only
 if $\pi_{\lambda}|_{\widetilde{H}}$ is multiplicity free.
 \end{itemize}
\end{theorem}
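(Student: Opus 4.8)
The plan is to deduce both statements from three facts already in place. First, by Corollary \ref{Generalized Toeplitz operator commute with representation}, every Toeplitz operator $T_\varphi^{(\lambda)}$ with $H$-invariant symbol intertwines $\pi_\lambda|_{\widetilde H}$, hence lies in $\End_{\widetilde H}(A_\lambda^2(\mathbb{B}^n))$. Second, since $\pi_\lambda$ is a unitary representation, the set $\{\pi_\lambda(h):h\in\widetilde H\}$ is closed under adjoints, so its commutant $\End_{\widetilde H}(A_\lambda^2(\mathbb{B}^n))$ is a von Neumann algebra, in particular a norm-closed, $*$-closed subalgebra of $\mathcal{L}(A_\lambda^2(\mathbb{B}^n))$. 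Third, $\End_{\widetilde H}(A_\lambda^2(\mathbb{B}^n))$ is commutative if and only if $\pi_\lambda|_{\widetilde H}$ is multiplicity free, which is the criterion of Kobayashi quoted above.

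For part 1, I would note that the generators $\{T_\varphi^{(\lambda)}:\varphi\in BC^n(\mathbb{B}^n)^H\}$ all lie in $\End_{\widetilde H}(A_\lambda^2(\mathbb{B}^n))$ by the first fact. Because $\End_{\widetilde H}(A_\lambda^2(\mathbb{B}^n))$ is itself a $C^*$-algebra by the second fact, the smallest $C^*$-algebra containing these generators, namely $\mathcal{T}^{(\lambda)}(BC^n(\mathbb{B}^n)^H)$, is contained in it. A $C^*$-subalgebra of a commutative $C^*$-algebra is commutative, so the hypothesis that $\End_{\widetilde H}(A_\lambda^2(\mathbb{B}^n))$ is commutative immediately yields the claim. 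I emphasize that this step never requires identifying the adjoint $(T_\varphi^{(\lambda)})^*$ explicitly: the $*$-closedness of the commutant disposes of the adjoints of the generators automatically.

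For part 2, the implication that multiplicity free implies commutative is immediate, since then $\End_{\widetilde H}(A_\lambda^2(\mathbb{B}^n))$ is commutative by the Kobayashi criterion and part 1 applies. For the converse I would combine the density statement with Lemma \ref{commutitivity of dense subspace}. As $\widetilde H$ is compact its image $H$ in $SU(n,1)$ is compact, so Proposition \ref{generalized Englis density theorem} gives that $\mathcal{T}^{(\lambda)}(BC^n(\mathbb{B}^n)^H)$ is dense in $\End_{\widetilde H}(A_\lambda^2(\mathbb{B}^n))$ in the strong operator topology. Assuming $\mathcal{T}^{(\lambda)}(BC^n(\mathbb{B}^n)^H)$ is commutative, it is in particular a linear subspace of $\mathcal{L}(A_\lambda^2(\mathbb{B}^n))$ consisting of pairwise commuting operators, so Lemma \ref{commutitivity of dense subspace} forces its strong-operator closure to consist of commuting operators as well. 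That closure contains all of $\End_{\widetilde H}(A_\lambda^2(\mathbb{B}^n))$ by density, whence $\End_{\widetilde H}(A_\lambda^2(\mathbb{B}^n))$ is commutative, and the Kobayashi criterion yields that $\pi_\lambda|_{\widetilde H}$ is multiplicity free.

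I expect the main obstacle to be the converse direction of part 2, and in particular verifying that the density input is legitimate in the analytic-continuation range $\lambda>0$: Proposition \ref{generalized Englis density theorem} rests on decomposing $A_\lambda^2(\mathbb{B}^n)$ into finitely many finite-dimensional $\widetilde H$-invariant subspaces, which requires compactness of $\widetilde H$ and complete reducibility of $\pi_\lambda|_{\widetilde H}$, and on the finite-dimensionality argument behind Lemma \ref{zero on the diagonal} remaining valid once the Bergman inner product is replaced by the Sobolev-type inner product used for $\lambda\le n$. By contrast, the passage from weak to strong operator closure is routine by convexity, exactly as in Proposition \ref{generalized Englis density theorem}, and the Kobayashi equivalence is quoted, so the remaining subtleties are representation-theoretic bookkeeping rather than new analytic estimates.
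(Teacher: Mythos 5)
Your proposal is correct and follows essentially the same route as the paper: part 1 by noting the Toeplitz generators lie in $\End_{\widetilde H}(A_\lambda^2(\mathbb{B}^n))$ via the intertwining property, and part 2 by combining the Kobayashi multiplicity-free criterion with Proposition \ref{generalized Englis density theorem} and Lemma \ref{commutitivity of dense subspace}. Your extra observation that the commutant is a von Neumann algebra (so the generated $C^*$-algebra, not just the generators, sits inside it) is a small but welcome clarification that the paper leaves implicit.
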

\begin{proof}
    
(1) Using proposition \ref{Toeplitz operator commute with representation}  we have every Toeplitz operator with $H-$invariant symbol commute with the representation $\pi$. Therefore $\mathcal{T}^{(\lambda)}(BC^n(\mathbb{B}^n)^H)\subset \End_{\widetilde{H}}(A_{\lambda}^2(\mathbb{B}^n))$, and the commutativity of $\End_{\widetilde{H}}(A_{\lambda}^2(\mathbb{B}^n))$ implies  $\mathcal{T}^{(\lambda)}(BC^n(\mathbb{B}^n)^H)$ is also commutative.

(2) If the $C^*-$algebra $\mathcal{T}^{(\lambda)}(BC^n(\mathbb{B}^n)^H)$ is commutative, then by proposition \ref{generalized Englis density theorem} and lemma \ref{commutitivity of dense subspace}, we have the space $\End_{\widetilde{H}}(A_{\lambda}^2(\mathbb{B}^n))$ is commutative. Therefore, the representation $\pi_{\lambda}|_{\widetilde{H}}$ is multiplicity free. Conversely, if the representation $\pi_{\lambda}|_{\widetilde{H}}$ is multiplicity free, then $\End_{\widetilde{H}}(A_{\lambda}^2(\mathbb{B}^n))$ is commutative, and the commutativity of  $\mathcal{T}^{(\lambda)}(BC^n(\mathbb{B}^n)^H)$ follows from (1).
\end{proof}
 
\section{Spectral Representation of Toeplitz Operators}
On the unit ball $\mathbb{B}^n$, the Segal-Bargman transform was used to derive a formula for the spectrum of the Toeplitz operators, which was given as an integral formula in terms of the symbols \cite{Quiroga-Barranco2007,Quiroga-Barranco2008}. We will extend the results in \cite{Dawson2018} to the case when the weight $\lambda>0$. In particular, we will consider the class of symbols that are invariant under maximal abelian subgroups of $SU(n,1)$, and  we will show that Toeplitz operators can be realized as a convolution operator. 
  
\subsection{ The Restriction Principle}
 Let $M\subset \mathbb{B}^n$ be a submanifold. If $f\mapsto f|_M$ is injective on the space of holomorphic functions, then $M$ is called \textbf{restriction injective} \cite{Dawson2018}.
  Let $H$ be a closed subgroup of $G:=SU(n,1)$, and $\widetilde{H}$ be the inverse image of $H$ in the universal covering of $G$. Assume that the submanifold $M:=H.\,z_0=\widetilde{H}.\,z_0$ is restriction injective, then there exists a measure on $M$ that is invariant under the submanifolds $H$ and $\widetilde{H}$ \cite{Dawson2018}.

  \begin{proposition}
\cite{Dawson2018}  
   For $\lambda>0$, the map $\chi_{\lambda}: {\widetilde {H}_{z_0}}\rightarrow\mathbb{C} $, defined by $\chi_{\lambda}(h)=  j_{\lambda}(h,z_0)^{-1} $ is a unitary character satisfying
    \[ j_{\lambda}(hk,z_0)=j_{\lambda}(h,z_0)\chi_{\lambda}(k)^{-1}  \]
 for all $h\in \widetilde{H},\,k\in{\widetilde{H}_{z_0}}$.
  \end{proposition}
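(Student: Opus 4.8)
The plan is to derive all three assertions from the cocycle identity satisfied by the automorphy factor $j_\lambda$. Since $\pi_\lambda$ is a genuine unitary representation of $\widetilde{SU(n,1)}$ — the lift of $j_\lambda$ to the universal cover was introduced precisely to guarantee this for every $\lambda>0$ — writing out $\pi_\lambda(g_1g_2)=\pi_\lambda(g_1)\pi_\lambda(g_2)$ and comparing the two resulting expressions for the action on $f$ yields the cocycle relation
\[
j_\lambda(pq,z)=j_\lambda(p,q\cdot z)\,j_\lambda(q,z),
\]
valid for all $p,q\in\widetilde{SU(n,1)}$ and all $z\in\mathbb{B}^n$. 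Because this is deduced on the universal cover, where $\pi_\lambda$ is honestly multiplicative, no branch ambiguity enters.

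First I would establish the second assertion. Taking $p=h\in\widetilde{H}$ and $q=k\in\widetilde{H}_{z_0}$ in the cocycle relation and using $k\cdot z_0=z_0$ gives
\[
j_\lambda(hk,z_0)=j_\lambda(h,z_0)\,j_\lambda(k,z_0)=j_\lambda(h,z_0)\,\chi_\lambda(k)^{-1},
\]
which is exactly the claimed identity, since $\chi_\lambda(k)=j_\lambda(k,z_0)^{-1}$. Specializing to $p=k_1,\ q=k_2$ with both $k_1,k_2\in\widetilde{H}_{z_0}$ shows that $k\mapsto j_\lambda(k,z_0)$ is multiplicative on the stabilizer, whence $\chi_\lambda(k_1k_2)=\chi_\lambda(k_1)\chi_\lambda(k_2)$; thus $\chi_\lambda$ is a homomorphism into $\mathbb{C}^{\times}$.

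It remains to verify unitarity, i.e. $|\chi_\lambda(k)|=1$ for $k\in\widetilde{H}_{z_0}$. Here I would use the transformation law of the Bergman form under the ball action: writing the image of $k$ in $SU(n,1)$ as $\begin{pmatrix} a & v\\ w^t & d\end{pmatrix}$ with $k\cdot z=(az+v)(w^tz+d)^{-1}$, one has the standard identity
\[
1-|k\cdot z|^2=\frac{1-|z|^2}{|w^tz+d|^2}.
\]
Evaluating at $z=z_0$ and using $k\cdot z_0=z_0$ together with $1-|z_0|^2>0$ forces $|w^tz_0+d|=1$. Since $j_\lambda(k,z_0)=(w^tz_0+d)^{-\lambda}$ and $\lambda$ is real, it follows that $|j_\lambda(k,z_0)|=|w^tz_0+d|^{-\lambda}=1$, so $|\chi_\lambda(k)|=1$; note the modulus is insensitive to the choice of branch, so the lift causes no trouble here.

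The point requiring the most care is the passage to the universal cover: one must confirm that the lifted $j_\lambda$ genuinely satisfies the cocycle identity on $\widetilde{SU(n,1)}$ (and not merely up to a multiplier coming from the covering), since this is exactly what makes $\chi_\lambda$ a well-defined, single-valued character rather than only a projective one. This is guaranteed by the hypothesis that $\pi_\lambda$ is a bona fide representation of $\widetilde{SU(n,1)}$. The analytic-continuation feature ($\lambda>0$ rather than $\lambda>n$) costs nothing in this argument, as the cocycle and modulus computations are purely algebraic and hold verbatim once the lift is fixed.
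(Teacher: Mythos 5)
Your proof is correct. The paper states this proposition without proof, citing \cite{Dawson2018}; your argument --- the cocycle identity $j_\lambda(pq,z)=j_\lambda(p,q\cdot z)\,j_\lambda(q,z)$ for the lift to $\widetilde{SU(n,1)}$, specialized at the fixed point $z_0$, together with the identity $1-|g\cdot z|^2=(1-|z|^2)/|w^tz+d|^2$ to force $|w^tz_0+d|=1$ and hence unitarity --- is the standard route and is exactly what the cited source does, with the correct observation that taking the modulus makes the branch choice on the cover harmless.
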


 As $\chi_{\lambda}$ is a character of $\widetilde{H}_{z_0}$, we can consider the induced representation  $\tau_\lambda:=\Ind_{\widetilde{H}_{z_0}}^{\widetilde{H}}\chi_\lambda$. The Hilbert space for the induced representation is denoted by $L_{\chi_{\lambda}}^2(M,d\mu)$ which consists of the functions $f: \tilde{H}\rightarrow \mathbb{C}$ that are square integrable with respect to the invariant measure $\mu$ and satisfy  
  \begin{equation}
      f(kh)=\chi_{\lambda}(h)^{-1}f(k)
  \end{equation}
  for all $k\in\widetilde{H}$ and $ h\in\widetilde{H}_{z_0}$. Moreover, $\widetilde{H}$ acts unitarily on  $L_{\chi_{\lambda}}^2(M,d\mu)$ by \[h.f(k)=f(h^{-1}k)\]  Notice that the space $M$ is identified with $\widetilde{H}/\widetilde{H}_{z_0}\cong H/H_{z_0}$.

  There are $n+2$ conjugacy classes of maximal abelian subgroups of $\Aut(\mathbb{B}^n)$ \cite{Raul-Moment-maps}, we will list their representatives in the next section. In this section, when we write a subgroup $H$ of $SU(n,1)$ we mean a maximal abelian subgroup that is either quasi-elliptic, quasi-parabolic or quasi-hyperbolic. We will associate a differential operator $N$ for each one of them.

  For the quasi-elliptic abelian subgroup $E(n)$, we define the operator $N_{E(n)}:=0$ to be the zero operator. For the Quasi-parabolic subgroup$P(n)$, we define the operator  $N_{P(n)}:=y\dfrac{\partial}{\partial\,y}$. Finally, for the quasi-hyperbolic subgroup $H(n)$, we define the operator \[N_{H(n)}:=\dfrac{\sinh\,s}{\cosh\,s}\dfrac{\partial}{\partial\,s}\]

Suppose $n\geq\lambda>0$, let $m$ be the smallest non-negative integer such that $\lambda+2m>n$. Let $H$ ba a maximal abelian subgroup of $SU(n,1)$, define the operators:

\begin{equation}
\label{Operator A and B}
\begin{aligned}
    A_{\lambda} &:=(I+\dfrac{N_H}{\lambda +2m-1})...(I+\dfrac{N_H}{\lambda+m}) \\
    B_{\lambda} &:=(I+\dfrac{N_H}{\lambda+m-1})...(I+\dfrac{N_H}{\lambda})\\
\end{aligned}
\end{equation}
the operators $A_{\lambda} $ and $B_{\lambda}$ are injective. Define the space $\mathcal{V}_{\lambda}$ to be the space of analytic functions $f:\widetilde{H}\rightarrow \mathbb{C}$ such that $A_{\lambda}B_{\lambda}\,f\in L_{\chi_{\lambda+2m}}^2(M,d\mu)$ and satisfy \[ f(hh_1)=\chi_{\lambda}(h_1)^{-1}f(h) \] for all $h\in \Tilde{H}$, and $h_1\in \Tilde{H}_{z_0}$. Moreover, the space $\mathcal{V}_{\lambda}$ is equipped with the inner product \[<f,g>_{\lambda}:=<A_{\lambda}B_{\lambda} \, f,A_{\lambda}B_{\lambda}\, g>_{L_{\chi_{\lambda+2m}}^2}\] 
If $\lambda>n$, then we take $A_{\lambda}=B_{\lambda}=I$.

Define the map $D_{\lambda}: \widetilde{H}\rightarrow \mathbb{C}$ by $D_{\lambda}(h)=j_{\lambda}(h,z_0)$, the cocycle relation of $j_\lambda$ implies \[D_{\lambda}(hh_1)=\chi_{\lambda}(h_1)^{-1}D_{\lambda}(h)\]
If we apply the operators $A_\lambda B_\lambda$ on $D_{\lambda}$, then we have 
\begin{equation}
\label{AB applied to D}
   A_{\lambda}B_{\lambda}\, D_{\lambda}(h)=c D_{\lambda+2m}(h)
\end{equation}
in the quasi-elliptic and quasi-parabolic case, where $c$ is a constant that depends on $h$. Therefore, $D_{\lambda}$ belongs to the space $\mathcal{V}_{\lambda}$. In fact, equation \ref{AB applied to D} does not hold for the quasi-hyperbolic case, but we still have $D_{\lambda} \in \mathcal{V}_{\lambda}$ even in this case.

Now, define the restriction operator $R_{\lambda}: A_{\lambda}^2(\mathbb{B}^n)\rightarrow \mathcal{V}_{\lambda}$ by 
 \[R_{\lambda}(f)(h)=D_{\lambda}(h)f|_M(h.z_0) \]
and let $\mathcal{S}_{\lambda}$ be the closure of $R_{\lambda}(A_{\lambda}^2(\mathbb{B}^n))$ in $\mathcal{V}_{\lambda}$. The point evaluation maps are continuous in $A_{\lambda}^2(\mathbb{B}^n)$, it follows that $R_{\lambda}$ is closed. Therefore, the operator $R_{\lambda}^*: \mathcal{S}_{\lambda}\rightarrow A_{\lambda}^2(\mathbb{B}^n)$ is well-defined and 
\begin{align*}
    R_{\lambda}^*f(z)=&< R_{\lambda}^*f,K_z>_\lambda = <f, R_{\lambda}K_z>_{\mathcal{S}_{\lambda}} \\
    =& \int_{\widetilde{H}/\widetilde{H}_{z_0}} A_{\lambda}B_{\lambda} f(h)\,\, \overline{A_{\lambda}B_{\lambda}\Big( D_{\lambda}(h) K(h.z_0,z)\Big)} \, d\mu(h)
\end{align*}
Using integration by parts, the operator $R_{\lambda}^*$ can be written as
\[ R_{\lambda}^* f(z)=\int_{\widetilde{H}/\widetilde{H}_{z_0}}  f(h)\,\, Q\,\overline{A_{\lambda}B_{\lambda}\Big( D_{\lambda}(h) K(h.z_0,z)\Big)} \, d\mu(h) \]
where $Q$ is a polynomial differential operator acting on $h$.

The following proposition shows that the restriction operator $R_{\lambda}$ intertwine the representations $\big ( \pi_{\lambda}|_{\widetilde{H}},A_{\lambda}^2(\mathbb{B}^n)\big )$ and $\big ( \tau_\lambda,\mathcal{S}_\lambda\big )$ where $\tau_\lambda$ is the left regular representation on $\mathcal{S}_{\lambda}$.

\begin{proposition}
    If $f\in A_{\lambda}^2(\mathbb{B}^n)$, and $h,h_1\in \widetilde{H}$, then $R_{\lambda}(\pi_{\lambda}(h_1)f)(h)=(\tau_{\lambda}(h_1)R_{\lambda}f)(h)$.
\end{proposition}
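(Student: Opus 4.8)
The plan is to verify the stated identity pointwise in $h$ by expanding both sides through their defining formulas and reducing the equality to the cocycle property of the multiplier $j_\lambda$.

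First I would unwind the left-hand side. By the definition of the restriction operator, $R_{\lambda}(\pi_{\lambda}(h_1)f)(h) = D_{\lambda}(h)\,(\pi_{\lambda}(h_1)f)(h.z_0)$, and by the definition of $\pi_{\lambda}$ together with $D_{\lambda}(h)=j_{\lambda}(h,z_0)$ and the associativity $h_1^{-1}.(h.z_0)=(h_1^{-1}h).z_0$, this becomes
\[ R_{\lambda}(\pi_{\lambda}(h_1)f)(h) = j_{\lambda}(h,z_0)\, j_{\lambda}(h_1^{-1},h.z_0)\, f\big((h_1^{-1}h).z_0\big). \]
Next I would unwind the right-hand side. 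Since $\tau_{\lambda}$ is the left regular representation, $(\tau_{\lambda}(h_1)R_{\lambda}f)(h) = (R_{\lambda}f)(h_1^{-1}h)$, and applying the definition of $R_{\lambda}$ once more gives
\[ (\tau_{\lambda}(h_1)R_{\lambda}f)(h) = D_{\lambda}(h_1^{-1}h)\, f\big((h_1^{-1}h).z_0\big) = j_{\lambda}(h_1^{-1}h,z_0)\, f\big((h_1^{-1}h).z_0\big). \]

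Comparing the two displays, the identity holds if and only if $j_{\lambda}(h_1^{-1}h,z_0)=j_{\lambda}(h_1^{-1},h.z_0)\,j_{\lambda}(h,z_0)$. This is exactly the cocycle relation $j_{\lambda}(xy,z)=j_{\lambda}(x,y.z)\,j_{\lambda}(y,z)$ specialized to $x=h_1^{-1}$, $y=h$, $z=z_0$, i.e.\ the multiplier identity for $j_{\lambda}$ that makes $\pi_{\lambda}$ a genuine representation; it holds for all $\lambda>0$ after lifting $j_{\lambda}$ to $\widetilde{SU(n,1)}$. Substituting it into the left-hand display yields the right-hand display, which completes the argument.

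I do not expect a serious obstacle here, as the computation is direct; the only points requiring care are the bookkeeping of the inverses and the correct orientation of the cocycle relation. A further small check worth recording is that the statement is well-posed: $\pi_{\lambda}(h_1)$ is unitary and preserves $A_{\lambda}^2(\mathbb{B}^n)$, so $R_{\lambda}(\pi_{\lambda}(h_1)f)$ lies in $R_{\lambda}(A_{\lambda}^2(\mathbb{B}^n))\subset\mathcal{S}_{\lambda}$, while $\tau_{\lambda}(h_1)$ preserves $\mathcal{S}_{\lambda}$; thus both sides genuinely define elements of $\mathcal{S}_{\lambda}$ and the equality of their values at every $h$ is meaningful.
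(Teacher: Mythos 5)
Your proposal is correct and follows essentially the same route as the paper's proof: both reduce the identity to the cocycle relation $j_{\lambda}(h_1^{-1}h,z_0)=j_{\lambda}(h_1^{-1},h.z_0)\,j_{\lambda}(h,z_0)$ after unwinding the definitions of $R_{\lambda}$, $\pi_{\lambda}$, and $\tau_{\lambda}$. The only cosmetic difference is that the paper carries out a single chain of equalities from left to right while you expand both sides and meet in the middle; your added remark on well-posedness is a harmless bonus.
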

\begin{proof}
    Let $f\in A_{\lambda}^2(\mathbb{B}^n)$, $h$ and $h_1\in \widetilde{H}$, then we have
   \begin{align*}
     R_{\lambda}(\pi_{\lambda}(h_1)f)(h)=&D_{\lambda}(h)(\pi_{\lambda}(h_1)f)|_{M}(h.z_0) \\
     =& D_{\lambda}(h) j_{\lambda}(h_1^{-1},h.z_0)f(h_1^{-1}h.z_0)\\
     =& j_{\lambda}(h,z_0)j_{\lambda}(h_1^{-1},h.z_0)f(h_1^{-1}h.z_0)\\
     =& j_{\lambda}(h_1^{-1}h,z_0)f(h_1^{-1}h.z_0)\\
     =& \tau_{\lambda}(h_1)\big{(}j_{\lambda}(h,z_0)f(h.z_0)\big{)}\\
     =& (\tau_{\lambda}(h_1)R_{\lambda}f)(h)\qedhere
  \end{align*}
  \end{proof}  

We will need the following lemma in the next section. Following \cite{Dawson2018}, for $h,k\in\widetilde{H}$ we define the operator \[ R_{\lambda}(h,k)=D_{\lambda}(h)\, Q\overline{A_{\lambda}B_{\lambda} \big ( D_{\lambda}(k) K_{\lambda}(k.z_0,h.z_0)\big )} \]

  \begin{lemma}  \label{RR^* integral formula}
  Let $f\in\mathcal{S}_{\lambda}$, then \[ R_{\lambda}R_{\lambda}^*(f)(h)=\int_M f(k) R_{\lambda}(h,k)\,d\mu(k) \]
  \end{lemma}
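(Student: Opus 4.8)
The plan is to prove the identity by a direct computation, unfolding the two definitions in sequence rather than appealing to any abstract interchange of limits. The starting point is the defining formula $R_{\lambda}(g)(h)=D_{\lambda}(h)\,g|_M(h.z_0)$ for a holomorphic $g\in A_{\lambda}^2(\mathbb{B}^n)$. Since $R_{\lambda}^*$ maps $\mathcal{S}_{\lambda}$ into $A_{\lambda}^2(\mathbb{B}^n)$, I may apply $R_{\lambda}$ to $g:=R_{\lambda}^*f$ and obtain
\[ R_{\lambda}R_{\lambda}^*(f)(h)=D_{\lambda}(h)\,(R_{\lambda}^*f)(h.z_0). \]
The point to emphasize here is that the integral formula for $R_{\lambda}^*$ was derived from $R_{\lambda}^*f(z)=\langle R_{\lambda}^*f,K_z\rangle_{\lambda}$, so it genuinely computes the pointwise value of the holomorphic function $R_{\lambda}^*f$ at each $z\in\mathbb{B}^n$; consequently the value at $z=h.z_0$ is obtained simply by substituting $z=h.z_0$ into that formula, with no justification needed beyond the continuity of point evaluation on $A_{\lambda}^2(\mathbb{B}^n)$.

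Next I would substitute the integral expression for $R_{\lambda}^*f$, evaluated at $z=h.z_0$, relabelling the integration variable as $k$ to keep it distinct from the outer group element $h$, and then move the scalar $D_{\lambda}(h)$ under the integral sign:
\begin{align*}
 R_{\lambda}R_{\lambda}^*(f)(h)
 &=D_{\lambda}(h)\int_{\widetilde{H}/\widetilde{H}_{z_0}} f(k)\,Q\,\overline{A_{\lambda}B_{\lambda}\big(D_{\lambda}(k)K(k.z_0,h.z_0)\big)}\,d\mu(k)\\
 &=\int_{M} f(k)\,D_{\lambda}(h)\,Q\,\overline{A_{\lambda}B_{\lambda}\big(D_{\lambda}(k)K(k.z_0,h.z_0)\big)}\,d\mu(k).
\end{align*}
Comparing the integrand with the definition $R_{\lambda}(h,k)=D_{\lambda}(h)\,Q\,\overline{A_{\lambda}B_{\lambda}\big(D_{\lambda}(k)K_{\lambda}(k.z_0,h.z_0)\big)}$ shows that the integrand equals $f(k)R_{\lambda}(h,k)$, which is exactly the claimed formula.

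The only genuinely delicate bookkeeping lies in tracking the variable on which the differential operators $Q$, $A_{\lambda}$ and $B_{\lambda}$ act. In the formula for $R_{\lambda}^*$ they act on the integration variable; after substituting $z=h.z_0$ and relabelling that variable as $k$, they act on $k$, which is precisely the variable appearing in the definition of $R_{\lambda}(h,k)$, so no reinterpretation of the operators is required and the two expressions coincide term by term. I do not expect any analytic obstacle: no Fubini-type interchange is needed, since the evaluation at $h.z_0$ acts on the already-computed pointwise values of the holomorphic function $R_{\lambda}^*f$ rather than on the integrand, and the sole manipulation is pulling the scalar $D_{\lambda}(h)$ inside the integral.
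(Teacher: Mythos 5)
Your proposal is correct and follows essentially the same route as the paper: apply $R_{\lambda}$ to the integral formula for $R_{\lambda}^*f$, evaluate the holomorphic function at $z=h.z_0$, pull the scalar $D_{\lambda}(h)$ inside the integral, and recognize the integrand as $f(k)R_{\lambda}(h,k)$. The extra care you take in tracking which variable the operators $Q$, $A_{\lambda}$, $B_{\lambda}$ act on is a worthwhile clarification but does not change the argument.
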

  \begin{proof}
      Let $f\in\mathcal{S}_{\lambda}$, we have
  \[
   R_{\lambda}^*(f)(z)= \int_{\widetilde{H}/\widetilde{H}_{z_0}}  f(h)\, Q\,\overline{A_{\lambda}B_{\lambda}\Big( D_{\lambda}(h) K(h.z_0,z)\Big)} \, d\mu(h) \]
Therefore, $R_{\lambda}R_{\lambda}^*(f)$ becomes 
  
  \begin{align*}
      R_{\lambda}R_{\lambda}^*(f)(h)=&R_{\lambda}\big ( \int_{\widetilde{H}/{\widetilde{H}_z{_0}}} f(k) \, Q \, \overline{A_{\lambda}B_{\lambda} \big ( D_{\lambda}(k)K_{\lambda}(k.z_0,z) \big ) }.d\mu(k)\big ) (h) \\
      =&D_{\lambda}(h) \int_{\widetilde{H}/{\widetilde{H}_z{_0}}} f(k) \, Q\,\overline{A_{\lambda}B_{\lambda} \big ( D_{\lambda}(k)K_{\lambda}(k.z_0,h.z_0) \big ) }.d\mu(k)\\
      =& \int_M f(k) R_{\lambda}(h,k)\,d\mu(k)\qedhere
  \end{align*}
  \end{proof}
 
The operator $R_{\lambda}R_{\lambda}^{*}$ is a positive and closed, hence the square root $\sqrt{R_{\lambda}R_{\lambda}^{*}}$ is well defined and we the have polar decomposition \[R_{\lambda}^{*}=U_{\lambda}\sqrt{R_{\lambda}R_{\lambda}^{*}}\] For a unique unitary map $U_{\lambda}: \mathcal{S}_{\lambda}\rightarrow A_{\lambda}^2(\mathbb{B}^n)$. 
  The operator $U_{\lambda}$ is called the Segal-Bargmann transform see \cite{olafsson1996generalizations}. Since the operator $R_{\lambda}$ intertwine the representations $\pi_{\lambda}|_{\widetilde{H}}$, and $\tau_{\lambda}$, then $U_{\lambda}$ is a unitary $\widetilde{H}$-isomorphism. The above discussion can be summarized in the following theorem, which shows that \cite[Theorem 1]{Dawson2018} holds for $\lambda>0$.
  \begin{theorem}
  The Segal-Bargmann transform $U_{\lambda}:\Big{(}\mathcal{S}_{\lambda}, \tau_{\lambda} \Big{)} \rightarrow \Big{(}A_{\lambda}^2(\mathbb{B}^n),\, \pi_{\lambda}|_{\widetilde{H}} \Big{)}$ is a unitary $\widetilde{H}-$isomorphism.
  \end{theorem}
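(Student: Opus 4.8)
The plan is to read off both required properties of $U_\lambda$ from the polar decomposition $R_\lambda^{*}=U_\lambda\sqrt{R_\lambda R_\lambda^{*}}$ set up in the preceding discussion: first that the partial isometry $U_\lambda$ is in fact a \emph{unitary} map $\mathcal{S}_\lambda\to A_\lambda^2(\mathbb{B}^n)$, and then that it intertwines $\tau_\lambda$ with $\pi_\lambda|_{\widetilde H}$. Since $R_\lambda$ is closed and densely defined, I would throughout work with $R_\lambda^{*}$, $R_\lambda R_\lambda^{*}$ and its positive self-adjoint square root as (a priori unbounded) operators, and invoke the standard facts about polar decomposition of closed operators.

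For unitarity, recall that the partial isometry in the polar decomposition of $T=R_\lambda^{*}$ has initial space $\overline{\operatorname{ran}\sqrt{R_\lambda R_\lambda^{*}}}=\overline{\operatorname{ran} R_\lambda}$ and final space $\overline{\operatorname{ran} R_\lambda^{*}}=(\ker R_\lambda)^{\perp}$. The initial space is all of $\mathcal{S}_\lambda$ by the very definition of $\mathcal{S}_\lambda$ as the closure of $R_\lambda\big(A_\lambda^2(\mathbb{B}^n)\big)$, so $U_\lambda$ is isometric on $\mathcal{S}_\lambda$. For the final space I would show $R_\lambda$ is injective: since $R_\lambda f(h)=D_\lambda(h)\,f|_M(h.z_0)$ with $D_\lambda(h)=j_\lambda(h,z_0)$ nowhere zero (being a cocycle value), $R_\lambda f=0$ forces $f|_M=0$, and the restriction injectivity of $M$ then gives $f=0$. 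Hence $\ker R_\lambda=0$, the final space equals $A_\lambda^2(\mathbb{B}^n)$, and $U_\lambda$ is unitary.

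For the intertwining property I would begin from the proposition just proved, $R_\lambda\,\pi_\lambda(h)=\tau_\lambda(h)\,R_\lambda$, and take adjoints. Because $\pi_\lambda|_{\widetilde H}$ and $\tau_\lambda$ are unitary representations, so that $\pi_\lambda(h)^{*}=\pi_\lambda(h^{-1})$ and $\tau_\lambda(h)^{*}=\tau_\lambda(h^{-1})$, this yields $R_\lambda^{*}\,\tau_\lambda(h)=\pi_\lambda(h)\,R_\lambda^{*}$ for every $h\in\widetilde H$. Composing on the left with $R_\lambda$ shows $R_\lambda R_\lambda^{*}$ commutes with each $\tau_\lambda(h)$; as $\tau_\lambda(h)$ is unitary and $R_\lambda R_\lambda^{*}$ is positive self-adjoint, uniqueness of the positive square root gives that $\sqrt{R_\lambda R_\lambda^{*}}$ commutes with $\tau_\lambda(h)$ as well. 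Substituting the polar decomposition into $R_\lambda^{*}\,\tau_\lambda(h)=\pi_\lambda(h)\,R_\lambda^{*}$ and using this commutation produces $U_\lambda\,\tau_\lambda(h)\,\sqrt{R_\lambda R_\lambda^{*}}=\pi_\lambda(h)\,U_\lambda\,\sqrt{R_\lambda R_\lambda^{*}}$; since $\operatorname{ran}\sqrt{R_\lambda R_\lambda^{*}}$ is dense in $\mathcal{S}_\lambda$, I can cancel the square root on this dense range and conclude $U_\lambda\,\tau_\lambda(h)=\pi_\lambda(h)\,U_\lambda$. Together with unitarity this is precisely the statement that $U_\lambda$ is a unitary $\widetilde H$-isomorphism.

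The step I expect to be the main obstacle is the careful treatment of the possibly unbounded operators $R_\lambda^{*}$ and $\sqrt{R_\lambda R_\lambda^{*}}$: one must check that each bounded $\tau_\lambda(h)$ preserves the relevant domains so that the adjoint and commutation identities are genuine operator identities, that commutation with $R_\lambda R_\lambda^{*}$ indeed descends to the square root via the functional calculus, and that the final cancellation of $\sqrt{R_\lambda R_\lambda^{*}}$ on its dense range is legitimate. Once these domain matters are handled, the remainder is bookkeeping with the cocycle identity for $j_\lambda$ and the definitions of $R_\lambda$, $D_\lambda$ and $\mathcal{S}_\lambda$.
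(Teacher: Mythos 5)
Your proposal is correct and follows essentially the same route as the paper, which obtains $U_\lambda$ from the polar decomposition $R_\lambda^{*}=U_\lambda\sqrt{R_\lambda R_\lambda^{*}}$ and deduces the $\widetilde H$-equivariance from the intertwining relation $R_\lambda\,\pi_\lambda(h)=\tau_\lambda(h)\,R_\lambda$ established in the preceding proposition. You simply supply the details the paper leaves implicit (injectivity of $R_\lambda$ via restriction injectivity and the nonvanishing of $D_\lambda$, passage of the commutation to the square root, and the cancellation on the dense range), all of which are correct.
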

  
  If $\widetilde{H}$ is Type I subgroup, then the representation $\pi_{\lambda}|_{\widetilde{H}}$ admits a unique direct integral decomposition ( up to a unitary equivalence)
  \[ \pi_{\lambda}|_{\widetilde{H}}\simeq \int_{\widehat{\widetilde{H}}}^{\bigoplus}m_{\pi_{\lambda}}(\sigma)\sigma d\mu_{\lambda}(\sigma) \] where $\widehat{\widetilde{H}}$ is the set of equivalence classes of irreducible unitary representation of $\widetilde{H}$, $\mu_{\lambda}$ is a Borel measure on $\widehat{\widetilde{H}}$, and $m_{\pi_{\lambda}}:\widehat{\widetilde{H}}\rightarrow \mathbb{N}\cup \lbrace\infty\rbrace $ is the multiplicity function. Moreover, multiplicity free representations corresponds to commutative $C^*-$algebras, in particular,  Theorem \ref{multiplicity free and commutativity} says that If $H$ is compact, then $\mathcal{T}^{(\lambda)}(BC^n(\mathbb{B}^n)^H)$ is commutative if and only if $\pi_{\lambda}|_{\widetilde{H}}$ is multiplicity free. Additionally, if an operator commutes with the representation $\pi_{\lambda}$, then the operator also has a direct integral decomposition. Furthermore, Corollary \ref{Generalized Toeplitz operator commute with representation} says that Toeplitz operator $T_{\varphi}^{(\lambda)}$ is an intertwining operator for $\pi_{\lambda}$ if and only if the symbol $\varphi$ is $\widetilde{H}-$invariant. Since the Segal-Bargmann transform $U_{\lambda}$ commute with $\pi_{\lambda}$ then the diagonalization of the Toeplitz operator is 
  \[ U_{\lambda}^{*}T_{\varphi}^{(\lambda)}U_{\lambda}= \int_{\widehat{\widetilde{H}}}^{\bigoplus}m_{\varphi,{\lambda}}(\sigma)\id_{\mathcal{H}_{\sigma}} d\mu_{\lambda}(\sigma)\] where $m_{\varphi,{\lambda}}:\widehat{\widetilde{H}}\rightarrow \mathbb{C}$. The set $(m_{\varphi,{\lambda}}(\sigma))_{\sigma}$ is the \textit{spectrum} of the Toeplitz operator $T_{\varphi}^{(\lambda)}$.


\section{Maximal Abelian Subgroups of $SU(n,1)$}
Our main goal in this section is to describe the spectral decomposition of Toeplitz operators with symbols that are invariant under the maximal abelian subgroups of $SU(n,1)$. In particular, we will apply the restriction principle to write the Toeplitz operators as a convolution operator on certain sections of line bundles. In fact, we will show that the work in \cite{Dawson2018} can be generalized to the analytic continuation case.

The group $SU(n,1)$ realizes the group of all automorphisms of the unit ball $\mathbb{B}^n$. As mentioned before, there are n+2 conjugacy classes of maximal abelian subgroups of $\Aut(\mathbb{B}^n)$. It is easier to describe their action on an unbounded realization of the unit ball $\mathbb{B}^n$. Let $D_n$ be the unbounded domain defined by 
\[ D_n= \lbrace (z^{'},z_n)\in \mathbb{C}^{n-1}\times \mathbb{C} \mid Im(z_n)-|z^{'}|^2 >0 \rbrace   \]
The Cayley transform defines a biholomorphism $\mathbb{B}^n\rightarrow D_n$, which is given by \[ z\mapsto \dfrac{i}{1+z_n}(z^{'},1-z_n ) \] with inverse 
\[  z\mapsto \dfrac{1}{1-i z_n}(-2iz^{'}, 1+iz_n) \]
Moreover, if we define 
\[ C:= 
\begin{pmatrix}
\begin{smallmatrix}
  i \\
  && \ddots\\
  &&& i\\
  &&&& -i && i\\
  &&&&  1 && 1
  \end{smallmatrix}
  \end{pmatrix}
   \]
then the group $CSU(n,1)C^{-1}$ realizes the group of biholomorphisms of $D_n$.
Each maximal abelian subgroup is conjugate to one of the following representative \cite{Raul-Moment-maps,Dawson2018} :

\textbf{ Quasi-elliptic}: This maximal abelian subgroup corresponds to the maximal compact torus in $SU(n,1)$, it is given by the $\mathbb{T}^n-$action on the unit ball \[ t.z=(t_1z_1,...,t_nz_n) \] where $t\in \mathbb{T}^n$ and $z\in \mathbb{B}^n$.

\textbf{Quasi-parabolic}: This group is isomorphic to $\mathbb{T}^{n-1}\times \mathbb{R}$, and it acts on $D_n$ by \[ (t,y).(z^{'},z_n)=(tz^{'},z_n+y) \]  

\textbf{Quasi-hyperbolic}: This group is isomorphic to $\mathbb{T}^{n-1}\times \mathbb{R}^{+}$ and acts on $D_n$ by \[ (t,r).(z^{'}.z_n)=(rtz^{'},r^2 z_n) \]

\textbf{Nilpotent}: This group is isomorphic to $\mathbb{R}^{n-1}\times \mathbb{R}$ and acts on $D_n$ by \[ (b,s).(z^{'},z_n)=(z^{'}+b,z_n+2i<z^{'},b>+s+i|b|^2) \]

\textbf{Quasi-nilpotent}: This group is isomorphic to $\mathbb{T}^k \times \mathbb{R}^{n-k-1}\times \mathbb{R}^{+}$ where $1\leq k\leq n-2$, this group acts on $D_n$ by \[ (t,b,s).(z^{'},z^{''},z_n)=(tz^{'}, z^{''}+b,z_n+2i<z^{''},b>+s+i|b|^2) \]
We will restrict our work to the first three classes of subgroups. In particular, when we say a maximal abelian subgroup, we mean it is either Quasi-elliptic, Quasi-parabolic, or Quasi-hyperbolic. 

Let $H$ be a maximal abelian subgroup of $SU(n,1)$, and $\widetilde{H}$ be the subgroup of $\widetilde{SU(n,1)}$ that covers $H$. The character $\chi_{\lambda}: H_{z_0}\rightarrow \mathbb{C}$ can be lifted to $\widetilde{H}$. This can be done by constructing a holomorphic embedding $\widetilde{H}\hookrightarrow H/H_{z_0}\times \mathbb{R}$, and $\chi_{\lambda}$ extends to $\widetilde{H}$ by defining \[ \chi_{\lambda}\big{(} (h,x) \big{)}:=e^{2\pi i\lambda x} \] for all $h\in H/H_{z_0}$ and $x\in \mathbb{R}$. This construction will be clear when we examine each case of the maximal abelian subgroups of $SU(n,1)$ in the following subsections.

Let $f$ be in $\mathcal{S}_{\lambda}$, and define $\widetilde{f}:=f\chi_{\lambda}$ , we will show that the operator $R_{\lambda}R_{\lambda}^*$ can be written as a convolution operator. In particular, we will show that  \[R_{\lambda}R_{\lambda}^* f=\chi_{-\lambda}. (\widetilde{f}* \phi_H )\] where $\phi_H\in L^1(H/H_{z_0})$.
Therefore, the space $\mathcal{S}_{\lambda}$ can be realized as 
\[ \mathcal{S}_{\lambda}= \lbrace f \in \mathcal{V}_{\lambda}: 
\big{(} \text{For } \psi \in \widehat{H/H_{z_0}} \text{ such  that } \widehat{\phi_H}(\psi)=0\big{)} \, \mathcal{F}_{H/H_{z_0}} \widetilde{f}(\psi)=0 \rbrace \] 
where $\mathcal{F}_{H/H_{z_0}}$ is the Fourier transform on $H/H_{z_0}$.

 \begin{subsection}{Quasi-elliptic}
  Following \cite{Dawson2018}, the quasi-elliptic abelian subgroup corresponds to the maximal compact torus in $SU(n,1)$, and it is given by 
  \[E(n)= \begin{Bmatrix}
  
  k_{t,a}=\begin{pmatrix}
  
  \begin{smallmatrix}
  at_1 &\\
  & at_2\\
  & & \ddots\\
  &&& at_n\\
  &&&& a
  \end{smallmatrix}
  
  \end{pmatrix}
  
  \Bigg|\begin{smallmatrix}
   a, t_1,...,t_n\in\mathbb{T}\\
  \Det(k_{t,a})=1
  \end{smallmatrix}
  \end{Bmatrix}
  \]
  
  The subgroup in the universal covering of $SU(n,1)$ that corresponds to $E(n)$ can be identified with \[ \widetilde{E(n)}=\lbrace (t_1,t_2,...,t_n,x)\in \mathbb{T}^n\times\mathbb{R}\mid e^{2{\pi}i(n+1)x}t_1t_2...t_n=1\rbrace \] and the product on $\widetilde{E(n)}$ is then given by \[ (t,x_1).(s,x_2)=(ts,x_1+x_2)  \]
  Fix $z_0=\big( (2n)^{-1/2},...,(2n)^{-1/2}\big)\in\mathbb{B}^n$. The action of $E(n)$ on $z_0$ is given by \[k_{t,a}.z_0=\dfrac{1}{\sqrt{2n}}(t_1,...,t_n)  \] and the stabilizer of $z_0$ are given by
  \[ E(n)_{z_0}=\lbrace k_{t,a}\mid t_i's=1\rbrace =\lbrace k_{t,a} \mid  a^{n+1}=1\rbrace\simeq \mathbb{Z}_n\]
  \[ \widetilde{E(n)_{z_0}}=\lbrace (1,1,...,1,\dfrac{m}{n+1})\mid m\in\mathbb{Z} \rbrace\simeq \mathbb{Z} \]
  Let $q=(t,x)\in \widetilde{E(n)}$, then $D_{\lambda}(q)=e^{-2\pi i \lambda x}$. The  differential operators $A_\lambda$ and $B_\lambda$ (\ref{Operator A and B}) acts on $D_{\lambda}$ as an identity map. In particular, $A_\lambda B_\lambda(D_{\lambda}(q))=D_{\lambda}(q)$. Therefore, $D_{\lambda}\in \mathcal{V}_{\lambda}$ for every $\lambda>0$.

Let $f$ be in $\mathcal{S}_{\lambda}$, by Lemma \ref{RR^* integral formula} we have that the operator $R_{\lambda}R_{\lambda}^*$ is given by the integral 
\[  R_{\lambda}R_{\lambda}^*(f)(h)=\int_M A_{\lambda}B_{\lambda}(f(k)) R_{\lambda}(h,k)\,d\mu(k) \] 
where
\[R_{\lambda}(h,k)=D_{\lambda}(h)\overline{A_{\lambda}B_{\lambda} \big ( D_{\lambda}(k) K_{\lambda}(k.z_0,h.z_0)\big )} \]
Let $h=(t,x)$ and $k=(s,y)$. Since the differential operators $A_\lambda$ and $B_\lambda$ acts as the identity map, then the operator $R_{\lambda}(h,k)$ can be written as 
\begin{align*}
    R_{\lambda}(h,k)& =  D_{\lambda}(h)\overline{A_{\lambda}B_{\lambda} \big ( D_{\lambda}(k) K_{\lambda}(k.z_0,h.z_0)\big )} \\
   & =  D_{\lambda}(h)\overline{  D_{\lambda}(k) K_{\lambda}(k.z_0,h.z_0)}\\
    &=  D_{\lambda}(h)\overline{  D_{\lambda}(k)} K_{\lambda}(h.z_0,k.z_0)\\
   & =  e^{-2\pi i\lambda(x-y)}(1-<h.z_0 , k.z_0 >)^{-\lambda}\\
   & =  e^{-2\pi i\lambda(x-y)} \big{(} 1-\dfrac{1}{2n}\sum_{i=1}^n t_i (s_i)^{-1} \big{)} ^{-\lambda}
\end{align*}
Furthermore, if $f\in \mathcal{S}_{\lambda}$, then the operator $R_{\lambda}R_{\lambda}^*$ becomes
\begin{align*}
     R_{\lambda}R_{\lambda}^* f(t,x) & =\int_{\mathbb{R}\times \mathbb{T}^n} f(s,y) e^{-2\pi i (x-y)} \big{(} 1-\dfrac{1}{2n}\sum_{i=1}^n t_i (s_i)^{-1} \big{)} ^{-\lambda} ds\\
     & = e^{-2\pi i (x-y)} \big{(} \widetilde{f}* \phi_{E(n)} \big{)}(t) 
\end{align*}
where $\widetilde{f}(t)=e^{2\pi i\lambda x}f((t,x).z_0)$ for every $(t,x)\in\widetilde{E(n)}$, and the map $\phi_{E(n)}$ is given by 
\[  \phi_{E(n)}(t)= \big{(} 1-\dfrac{1}{2n}\sum_{i=1}^n t_i  \big{)} ^{-\lambda}\]
where $t\in \mathbb{T}^n$.

  \end{subsection}


\begin{subsection} {Quasi-parabolic}
    The quasi-parabolic subgroup is isomorphic to $\mathbb{T}^{n-1}\times \mathbb{R}$, and it acts on $D_n$, the unbounded realization of the unit ball, by 
    \[ (t,y).(z^{'},z_n)=(tz^{'},z_n+y) \]
    where $t\in \mathbb{T}^{n-1}$, $y\in \mathbb{R}$ and $(z^{'},z_n)\in D_n$.

The Quasi-parabolic can be written as a subgroup of $SU(n,1)$ as

 \[P(n)= \begin{Bmatrix}
  p_{t,y,a}=\begin{pmatrix}
  \begin{smallmatrix}
  at_1 &\\
  & at_2\\
  & & \ddots\\
  &&& at_{n-1}\\
  &&&& a(1+i\dfrac{y}{2}) && a(i \dfrac{y}{2})\\
  &&&& a(-i\dfrac{y}{2}) && a(1-i\dfrac{y}{2})
  \end{smallmatrix}
  
  \end{pmatrix}
  
  \Bigg|\begin{smallmatrix}
   a, t_1,...,t_{n-1} \,\in \, \mathbb{T}\, ,\, y\,\in \,\mathbb{R}\\
  a^{n+1}t_1...t_{n-1}=1
  \end{smallmatrix}
  \end{Bmatrix}
  \]
The group $P(n)$ acts on the unit ball $\mathbb{B}^n$ by 
\[ p_{t,y,a}.(z^{'},z_n)=\Big{(} \dfrac{2}{-iyz_n+2-iy}tz^{'}, \dfrac{(2+iy)z_n+iy}{-iyz_n+2-iy} \Big{)} \]

Let $\widetilde{P(n)}$ be the subgroup of $\widetilde{SU(n,1)}$ that covers $P(n)$. This group can be identified with 
\[\widetilde{P(n)}= \lbrace (t,y,x) \mid  t\in \mathbb{T}^{n-1}, x,y\in \mathbb{R} \quad and \quad e^{2\pi i (n+1)x}t_1...t_{n-1}=1 \rbrace\]
with product 
\[ (t,y,x).(r,w,m)=(tr,y+w,x+m) \]
and the projection map is given by 
\begin{align*}
\widetilde{P(n)} & \rightarrow P(n)\\
(t,y,x) & \mapsto p_{t,y,e^{2\pi i x}}  
\end{align*}
Let $z_0= \Big{(} \dfrac{1}{\sqrt{2(n-1)}},...,\dfrac{1}{\sqrt{2(n-1)}},0 \Big{)}$ be in $\mathbb{B}^n$. For each $q=(t,y,x)\in\widetilde{P(n)}$ we have
\[ D_{\lambda}(q)=2^{\lambda}e^{-2\pi i \lambda x}(2-iy)^{-\lambda}\]
 The operator $N=y\dfrac{\partial}{\partial\,y}$,
and direct calculations shows that 
\[ A_{\lambda}B_{\lambda}\big{(}D_{\lambda}(q)\big{)}= 2^{\lambda+2m}e^{-2\pi i \lambda x}(2-iy)^{-(\lambda+2m)} \]
Also, we have
\begin{align*}
    |A_{\lambda}B_{\lambda}\big{(}D_{\lambda}(q)\big{)}|^2 & = 2^{2(\lambda+2m)}|2-iy|^{-2(\lambda+2m)}\\
    &= \Big{(} 1+ \dfrac{|y|^2}{4} \Big{)}^{-(\lambda+2m)}
\end{align*}
Therefore, $D_{\lambda}\in \mathcal{S}_{\lambda}$ for all $\lambda>\dfrac{1}{2}-2m$.

Let $h=(t,y,x)$ and $k=(t^{'},y^{'},x^{'})$. The operator $R_{\lambda}(h,k)$ can be written as
\begin{align*}
    R_{\lambda}(h,k) & =  D_{\lambda}(h) Q\,\overline{A_{\lambda}B_{\lambda} \big ( D_{\lambda}(k) K_{\lambda}(k.z_0,h.z_0)\big )}\\ & = e^{-2\pi i (x-x^{'})} \, Q\overline{A_{\lambda}B_{\lambda} \Big{(} 1-\dfrac{i}{2}(y^{'}-y)-\dfrac{1}{2(n-1)}\sum_{i=1}^{n-1} t_i^{'} t_i^{-1} \Big{)}^{-\lambda}}\\
    &= e^{-2\pi i (x-x^{'})} Q\, A_{\lambda}B_{\lambda} \Big{(} \big{(}1-\dfrac{i}{2}(y-y^{'})-\dfrac{1}{2(n-1)}\sum_{i=1}^{n-1} t_i (t_i^{'})^{-1} \big{)} ^{-\lambda}\Big{)}
\end{align*}
where $Q$, $A_{\lambda}$ and $B_{\lambda}$ acts on $y^{'}$. Moreover, the operator $Q$ has the form 
\[ Q=c_1+c_2 y\dfrac{\partial}{\partial y}+c_3 y^2 \dfrac{ \partial^2}{\partial y^2}+...+c_{2m} y^{2m}\dfrac{\partial^{2m}}{\partial y^{2m}} \]
where $c_i's$ are constants .

If $f\in \mathcal{S}_{\lambda}$, then the operator $R_{\lambda}R_{\lambda}^*$ becomes 
\begin{align*}
    R_{\lambda}R_{\lambda}^* f(t,y,x)&=\int\limits_{\mathbb{R}\times \mathbb{T}^{n-1}} f(t^{'}, y^{'}, z^{'}) e^{-2\pi i (x-x^{'})} \,\, Q\,A_{\lambda}B_{\lambda} \Big{(} \big{(} 1-\dfrac{i}{2}(y-y^{'}) \\& \quad\quad\quad -\dfrac{1}{2(n-1)}\sum_{i=1}^{n-1} t_i (t_i^{'})^{-1} \big{)}^{-\lambda}\Big{)}dt^{'}dy^{'} \\
    & = e^{-2\pi i \lambda x} (\widetilde{f}*\phi_{P(n)})(t,y)
\end{align*}
where $\widetilde{f}(t,y)=e^{2\pi i\lambda x}f(t,y,x)$ and the function $\phi_{P(n)}$ is given by 
\[\phi_{P(n)}(t,y)= QA_{\lambda}B_{\lambda} \Big{(} \big{(} 1-\dfrac{i}{2}y-\dfrac{1}{2(n-1)}\sum_{i=1}^{n-1} t_i \big{)}^{-\lambda}\Big{)}\]

The function $\phi_{P(n)}$ is in $L^1(P(n))$ for all $\lambda>0$. To prove this, first notice that 
\[ A_\lambda B_\lambda \big{(} 1-\dfrac{i}{2}y-\dfrac{1}{2(n-1)}\sum_{i=1}^{n-1} t_i \big{)}^{-\lambda}= \Big{(}1-\dfrac{1}{2(n-1)}\sum_{i=1}^{n-1} t_i \Big{)}^{2m} \Big{(}1-\dfrac{i}{2}y-\dfrac{1}{2(n-1)}\sum_{i=1}^{n-1} t_i  \Big{)}^{-(\lambda+2m)} \]
since $\big{|}\dfrac{1}{2(n-1)}\sum_{i=1}^{n-1} t_i \big{|} \leq 1$, then by following an argument similar to \cite [p.~212]{Dawson2018} we get 
\begin{equation}
\label{equality1}
    \big{|} \Big{(}1-\dfrac{i}{2}y-\dfrac{1}{2(n-1)}\sum_{i=1}^{n-1} t_i  \Big{)}^{-(\lambda+2m)} \big{|}\leq \big{|} 1+(|y|-1)^2 \big{|}^ {-(\lambda+2m)/2}
\end{equation}
Since the operator $Q$ is a polynomial differential operator, then it is enough to consider $y^r\dfrac{\partial^r}{\partial y^r}$.
Equation \ref{equality1} implies, $y^r \Big{(}1-\dfrac{i}{2}y-\dfrac{1}{2(n-1)}\sum_{i=1}^{n-1} t_i  \Big{)}^{-(\lambda+2m+r)} $ is in $L^1(P(n))$ for all $0\leq r \leq 2m$.
Therefore, $\phi_{P(n)}$ is in $L^1(P(n))$ for all $\lambda>1-2m$.

\end{subsection}

\begin{subsection}{Quasi-hyperbolic}
    The quasi-hyperbolic subgroup is isomorphic to $\mathbb{T}^{n-1}\times \mathbb{R}^+$ and it acts on the unbounded domain $D_n$ by
    \[ (t,r).(z^{'},z_n)=(rtz^{'},r^2z_n) \]

As a subgroup of $SU(n,1)$ it can be written as \cite{Dawson2018}

\[H(n)= \begin{Bmatrix}
  
  h_{t,s,a}=\begin{pmatrix}
  
  \begin{smallmatrix}
  at_1 &\\
  & at_2\\
  & & \ddots\\
  &&& at_{n-1}\\
  &&&& a\,\cosh\,s && a\, \sinh\,s\\
  &&&& a\,\sinh\,s && a\,\cosh\,s
  \end{smallmatrix}
  
  \end{pmatrix}
  
  \Bigg|\begin{smallmatrix}
   a, t_1,...,t_{n-1} \,\in \, \mathbb{T}\, ,\, s\,\in \,\mathbb{R}\\
  a^{n+1}t_1...t_{n-1}=1
  \end{smallmatrix}
  \end{Bmatrix}
\]
and it acts on the unit ball $\mathbb{B}^n$ by
\[ h_{t,s,a}.(z^{'},z_n)=\Big{(}\dfrac{tz^{'}}{z_n\,\sinh\,s+\cosh\,s},\dfrac{z_n\,\cosh\,s+\sinh\,s}{z_n\,\sinh\,s+\cosh\,s} \Big{)} \]
The group $H(1)$ is simply connected. So we have $ H(1)\simeq \widetilde{H(1)}\simeq\mathbb{R}$.
When $n>1$, then $\widetilde{H(n)}$ can be identified with
\[ \widetilde{H(n)}= \lbrace (t,s,x)\mid  t\in \mathbb{T}^{n-1},\, s,x \in \mathbb{R}\,\mid  e^{2{\pi}i(n+1)x}t_1t_2...t_{n-1}=1\rbrace\] with 
\[ (t,s,x).(t^{'},s^{'},x^{'})=(tt^{'},s+s^{'},x+x^{'}) \]

Let $z_0= \big{(} \dfrac{1}{\sqrt{2(n-1)}},...,\dfrac{1}{\sqrt{2(n-1)}},0 \big{)}$, then we have 
\[ h_{t,s,a}.z_0=\big{(} \dfrac{t}{\cosh\,s}z_0^{'} ,\tanh\,s \big{)}\]
So, we have $H(n).z_0\simeq H(n)/{H(n)_{z_0}}\simeq \mathbb{T}^{n-1}\times \mathbb{R}$.

Let $q=(t,s,x)\in\widetilde{H(n)}$, then we have $D_{\lambda}(q)=e^{-2\pi i \lambda x}\big{(} \cosh\,s\big{)}^{-\lambda}$. The operator $N$ is defined by \[ N=\dfrac{\sinh\,s}{\cosh\,s}\dfrac{\partial}{\partial\,s}\]
Direct calculations show that 
\begin{equation}
\label{differential of cosh}
    \big{(}I+\dfrac{N}{r}\big{)} (\cosh\,s)^{-l}=(r-l)r^{-1} (\cosh\,s)^{-l}+lr^{-1} (\cosh\,s)^{-(l+2)}
\end{equation}
and for all $l>0$, we have \[\int_{\mathbb{R}} (\cosh\,s)^{-l}\,ds\,<\infty\]
This with equation \ref{differential of cosh}  implies \[\int_{\mathbb{R}} A_{\lambda}B_{\lambda}\big{(} (\cosh\,s)^{-\lambda} \big{)}^2\, ds\, <\infty \]
Therefore, $D_{\lambda}$ is in $\mathcal{S}_{\lambda}$ for all $\lambda>0$.

Let $h=(t,s,x)$ and $k=(t^{'},s^{'},x^{'})$. The operator $R_{\lambda}(h,k)$ can be written as
\begin{align*}
    R_{\lambda}(h,k) & =  D_{\lambda}(h) Q\,\overline{A_{\lambda}B_{\lambda} \big ( D_{\lambda}(k) K_{\lambda}(k.z_0,h.z_0)\big )}\\ & = e^{-2\pi i (x-x^{'})} \, Q\overline{A_{\lambda}B_{\lambda} \Big{(} \cosh(s^{'}-s)-\dfrac{1}{2(n-1)}\sum_{i=1}^{n-1} t_i^{'} t_i^{-1} \Big{)}^{-\lambda}}\\
    &= e^{-2\pi i (x-x^{'})} Q\, A_{\lambda}B_{\lambda} \Big{(} \big{(}\cosh(s-s^{'})-\dfrac{1}{2(n-1)}\sum_{i=1}^{n-1} t_i (t_i^{'})^{-1} \big{)} ^{-\lambda}\Big{)}
\end{align*}
where $Q$, $A_{\lambda}$ and $B_{\lambda}$ acts on $s^{'}$. Moreover, the operator $Q$ has the form 
\[ Q=a_1(s)+a_2(s) \tanh(s)\dfrac{\partial}{\partial s}+a_3 {\tanh^2(s)} \dfrac{ \partial^2}{\partial s^2}+...+a_{2m}(s) {\tanh^{2m}(s)}\dfrac{\partial^{2m}}{\partial s^{2m}} \]
and $a_i(s)$ are functions 
\[ a_i(s)=\big{(}\sum_j c_j \dfrac{\partial^j}{\partial\,s^j}\big{)} \tanh\,s =\sum_{j}c_j \tanh^{(j)}(s) \] where $c_i's$ are constants.

If $f\in \mathcal{S}_{\lambda}$, then the operator $R_{\lambda}R_{\lambda}^*$ becomes 
\begin{align*}
    R_{\lambda}R_{\lambda}^* f(t,y,x)&=\int\limits_{\mathbb{R}\times \mathbb{T}^{n-1}} f(t^{'}, y^{'}, z^{'}) e^{-2\pi i (x-x^{'})} \, Q\, A_{\lambda}B_{\lambda} \Big{(} \big{(}\cosh(s-s^{'})\\ & \quad\quad\quad\quad\quad\quad\quad\quad\quad\quad\quad\quad\quad\quad -\dfrac{1}{2(n-1)}\sum_{i=1}^{n-1} t_i (t_i^{'})^{-1} \big{)} ^{-\lambda}\Big{)} dt^{'}ds^{'}\\
    & = e^{-2\pi i \lambda x} (\widetilde{f}*\phi_{H(n)})(t,s)
\end{align*}
where $\widetilde{f}(t,s)=e^{2\pi i\lambda x}f(t,s,x)$ and the function $\phi_{P(n)}$ is given by 
\[\phi_{H(n)}(t,s)= QA_{\lambda}B_{\lambda} \Big{(} \big{(} \cosh\,s-\dfrac{1}{2(n-1)}\sum_{i=1}^{n-1} t_i \big{)}^{-\lambda}\Big{)}\]

\begin{remark}{\rm
    Direct calculations shows that 
    \[
        A_{\lambda}B_{\lambda}\big{(} (\cosh s)^{-\lambda}\big{)}=\sum_{j=1}^{2m} c_j (\cosh s)^{-(\lambda+2j)}
    \]
    where $c_j's$ are constants. Also, we have \[ A_{\lambda}B_{\lambda}\Big{(} \big{(}\cosh s\, -\dfrac{1}{2(n-1)}\sum_{i=1}^{n-1} t_i\big{)}^{-\lambda}\Big{)}=\sum_{j=1}^{2m} \gamma_j(s) \big{(}\cosh s-\dfrac{1}{2(n-1)}\sum_{i=1}^{n-1} t_i\big{)}^{-(\lambda+2j)} \]
    where $\gamma_j(s)$ are functions that are constant multiples of $\sech\,s$ and $\tanh s$. Now consider 
    \begin{align*}
        \dfrac{\partial}{\partial s}\big{(} \gamma_j(s) \big{(}\cosh s-\dfrac{1}{2(n-1)}\sum_{i=1}^{n-1} & t_i\big{)}^{-(\lambda+2j)} \big{)}= \gamma_j^{'}(s)\big{(}\cosh s-\dfrac{1}{2(n-1)}\sum_{i=1}^{n-1} t_i\big{)}^{-(\lambda+2j)} \\ & + \gamma_j(s)(-\lambda-2j)\sinh s \,\, \big{(}\cosh s-\dfrac{1}{2(n-1)}\sum_{i=1}^{n-1} t_i\big{)}^{-(\lambda+2j+1)}
    \end{align*}

    It can be readily seen that
    \[ \dfrac{\partial^k}{\partial s^k} \Big{(}\gamma_j(s) \big{(}\cosh s-\dfrac{1}{2(n-1)}\sum_{i=1}^{n-1} t_i\big{)}^{-(\lambda+2j)} \Big{)} =\sum_{r=0}^k a_r(s)  \big{(}\cosh s-\dfrac{1}{2(n-1)}\sum_{i=1}^{n-1} t_i\big{)}^{-(\lambda+2j+r)}  \]
    where $a_r(s)$ is a linear combination of $\gamma_j(s)^{(i_r)}$ and powers of ${\sinh}^{q_r}$ and ${\cosh}^{w_r}$, where the powers $q_r,\, w_r$ are strictly less than $\lambda+2j+r$. Since \[\small{\dfrac{1}{2(n-1)}}\displaystyle\sum_{i=1}^{n-1} t_i\leq \dfrac{1}{2}\] it follows that  
    \[ \big{|} \cosh s\, -\dfrac{1}{2(n-1)}\sum_{i=1}^{n-1} t_i\big{|}^{-\lambda}\leq \big{|} \cosh s-\dfrac{1}{2}\big{|}^{-\lambda}\]
    Moreover, $|\tanh\,s|$ and $|\sech\,s|$ are bounded above by 1, and \[ \int_{\mathbb{R}}(\sinh s)^{q} (\cosh s-\dfrac{1}{2})^{-v}\,ds < \infty \]
     \[ \int_{\mathbb{R}}(\cosh s)^{w} (\cosh s-\dfrac{1}{2})^{-v}\,ds < \infty \]
    for all $q,w < v$ where $v>1$.
    Therefore, we have $\phi_{H(n)}\in L^1(H(n).z_0)$ for all $\lambda>0$.
 }   
\end{remark}

\end{subsection}

\section{ $T_{\varphi}^{(\lambda)}$ with Symbols Invariant under Maximal Abelian Subgroups}

   In this section we will show that the analytic continuation of Toeplitz operators can be written as a convolution operator on sections of line bundle. It was shown in \cite[Theorem 5.1]{Dawson2018} that for $\lambda>n$, the operator 
  \[ R_{\lambda}T_{\varphi}^{(\lambda)}R_{\lambda}^* :  \mathcal{S}_{\lambda}\rightarrow \mathcal{S}_{\lambda}  \]
  can be written as
  \[ R_{\lambda}T_{\varphi}^{(\lambda)}R_{\lambda}^* f=f*\nu_{\varphi} \]
  
  We will show that we can extend it to cover the analytic continuation case. First, we need the following Lemma, which shows how the representation $\pi_{\lambda}$ acts on the kernel of $A_{\lambda}^2(\mathbb{B}^n)$.
  
  \begin{lemma}
  Let $g\in G$, and $z\in \mathbb{B}^n$, we have \[ \pi_{\lambda}(g) K_{z}=\overline{j_{\lambda}(g,z)}K_{g.z} \]
  
  \end{lemma}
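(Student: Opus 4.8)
The plan is to verify the identity $\pi_\lambda(g)K_z = \overline{j_\lambda(g,z)}\,K_{g.z}$ by testing it against an arbitrary element $f \in A_\lambda^2(\mathbb{B}^n)$ using the reproducing property, since two elements of a reproducing kernel Hilbert space coincide precisely when their inner products against every $f$ agree. First I would compute $\langle f, \pi_\lambda(g)K_z\rangle_\lambda$. Using that $\pi_\lambda$ is a \emph{unitary} representation, this equals $\langle \pi_\lambda(g^{-1})f, K_z\rangle_\lambda$, and then the reproducing property of $K_z$ collapses the right-hand side to the point evaluation $\big(\pi_\lambda(g^{-1})f\big)(z)$.

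Next I would unwind that point evaluation through the explicit formula for the representation, namely $\pi_\lambda(g^{-1})f(z) = j_\lambda(g,z)\,f(g.z)$ (obtained by substituting $g^{-1}$ for $g$ in the defining formula $\pi_\lambda(g)f(z)=j_\lambda(g^{-1},z)f(g^{-1}.z)$). This produces
\begin{equation*}
\langle f, \pi_\lambda(g)K_z\rangle_\lambda = j_\lambda(g,z)\,f(g.z).
\end{equation*}
On the other side, I would compute $\langle f, \overline{j_\lambda(g,z)}\,K_{g.z}\rangle_\lambda$. Pulling the scalar $\overline{j_\lambda(g,z)}$ out of the second slot of the (conjugate-linear-in-the-second-variable) inner product returns $j_\lambda(g,z)$, and the reproducing property applied to $K_{g.z}$ gives $j_\lambda(g,z)\,f(g.z)$, matching the first computation. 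Since $f$ was arbitrary, the two vectors $\pi_\lambda(g)K_z$ and $\overline{j_\lambda(g,z)}\,K_{g.z}$ are equal.

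The routine bookkeeping here is the consistent tracking of which argument of the inner product is conjugate-linear and correctly moving $\pi_\lambda(g)$ to $\pi_\lambda(g^{-1})$ via unitarity. The one genuine point requiring care — and the step I would flag as the main obstacle — is justifying that $\pi_\lambda$ remains a unitary representation for the analytic-continuation range $\lambda>0$ (not merely $\lambda>n$), so that the adjoint relation $\pi_\lambda(g)^* = \pi_\lambda(g^{-1})$ and the reproducing identity for $K_z$ in $A_{\lambda,\#}^2(\mathbb{B}^n)$ are both available; this is precisely where the lifting of $j_\lambda$ to $\widetilde{SU(n,1)}$ and the reproducing-kernel structure established for the Sobolev-type spaces $A_{\lambda,\#}^2(\mathbb{B}^n)$ are invoked. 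Once unitarity and the reproducing property are in hand for all $\lambda>0$, the calculation above is purely formal.
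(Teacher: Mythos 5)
Your argument is correct: the paper states this lemma without proof, and your computation --- pairing $\pi_\lambda(g)K_z$ against an arbitrary $f$, using unitarity to move to $\pi_\lambda(g^{-1})f$, and applying the reproducing property on both sides --- is exactly the standard verification one would supply. You also rightly flag the only nontrivial input, namely that unitarity of $\pi_\lambda$ and the reproducing property of $K_z$ persist for all $\lambda>0$ on the spaces $A_{\lambda,\#}^2(\mathbb{B}^n)$, which the paper takes as established from its Section 2 and the lift of $j_\lambda$ to $\widetilde{SU(n,1)}$.
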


\begin{theorem}
    \label{Theorem as a convolution operator}
   Let $\lambda_0>0$ and $H$ be a maximal abelian subgroup of $SU(n,1)$ such that $H.z_0$ is restriction injective. Assume $\varphi\in BC^n(\mathbb{B}^n)^H$ is $H-$invariant, then 
   \[R_{\lambda_0}T_{\varphi}^{(\lambda_0)}R_{\lambda_0}^*f=A_{\lambda_0}B_{\lambda_0}(f)*\nu_{\varphi}^{\lambda_0}  \]
   where $\nu_{\varphi}^{\lambda_0}:H/H_{z_0}\rightarrow \mathbb{C}$ is given by
   
   \[\nu_{\varphi}^{\lambda_0}(h.z_0):=\An_{\lambda_0}\Big{(} \overline{ A_{\lambda_0}B_{\lambda_0} \big{(} <   \overline{j_{\lambda}(h,z_0)} K_{h.z_0,\,\lambda} , \,\,\, \varphi  K_{z_0,\lambda}  >_{\lambda} \big{)}} \Big{)} \]
and $\An_{\lambda_0}$ denotes taking the analytic continuation in $\lambda$ evaluated at $\lambda_0$
\end{theorem}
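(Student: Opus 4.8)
The plan is to reduce the convolution formula to the already-established integral expression for $R_\lambda R_\lambda^*$ from Lemma \ref{RR^* integral formula}, together with the intertwining behavior of $R_\lambda$ and the action of $\pi_\lambda$ on kernels from the preceding Lemma. The essential point is that for $\lambda>n$ the convolution identity $R_\lambda T_\varphi^{(\lambda)} R_\lambda^* f = f * \nu_\varphi^\lambda$ holds by \cite[Theorem 5.1]{Dawson2018}; the task is to analytically continue both sides in $\lambda$ to the base point $\lambda_0>0$ and identify the continued right-hand side as the stated convolution against $\nu_\varphi^{\lambda_0}$, with the operators $A_{\lambda_0}B_{\lambda_0}$ appearing from the inner product structure on $\mathcal{V}_\lambda$.

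\medskip

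\textbf{Key steps.} First I would write $R_\lambda T_\varphi^{(\lambda)} R_\lambda^* f(h)$ using $R_\lambda(f)(h)=D_\lambda(h)\,f|_M(h.z_0)$ and the adjoint formula
\[
  R_\lambda^* f(z)=\int_{\widetilde H/\widetilde H_{z_0}} A_\lambda B_\lambda f(k)\,\overline{A_\lambda B_\lambda\big(D_\lambda(k)K_\lambda(k.z_0,z)\big)}\,d\mu(k),
\]
so that after applying $T_\varphi^{(\lambda)}$ and then $R_\lambda$, the integrand acquires the factor
$\overline{A_\lambda B_\lambda\big(<\,\overline{j_\lambda(h,z_0)}K_{h.z_0,\lambda},\ \varphi K_{z_0,\lambda}\,>_\lambda\big)}$,
which is exactly the kernel one wants to recognize. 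Here the $H$-invariance of $\varphi$ is used to push the group translation onto the kernels: combining Corollary \ref{Generalized Toeplitz operator commute with representation} (so $T_\varphi^{(\lambda)}$ intertwines $\pi_\lambda|_{\widetilde H}$) with the relation $\pi_\lambda(g)K_z=\overline{j_\lambda(g,z)}K_{g.z}$ lets me reduce the two-variable kernel $R_\lambda(h,k)$ to a function of $h k^{-1}$ only, i.e.\ to convolution form. Second, I would invoke the uniqueness of the analytic continuation of $T_\varphi^{(\lambda)}$ (guaranteed for $\varphi\in BC^n(\mathbb{B}^n)$ by Theorem 2.1) to justify that each $\lambda$-dependent inner product continues analytically and that $\An_{\lambda_0}$ commutes with the $k$-integration, so that the continued operator equals the integral against $\An_{\lambda_0}$ of the kernel. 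Finally I would collect the surviving factor $A_{\lambda_0}B_{\lambda_0}(f)$ from the $A_\lambda B_\lambda f(k)$ term in $R_\lambda^*$ and read off $\nu_\varphi^{\lambda_0}$ as displayed.

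\medskip

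\textbf{Main obstacle.} The delicate part is the interchange of $\An_{\lambda_0}$ with the integral over $\widetilde H/\widetilde H_{z_0}$ and with the polynomial differential operator $Q$ (equivalently $A_\lambda B_\lambda$) acting in the group variable. One must show that the family $\lambda\mapsto$ (integrand) is holomorphic in a neighborhood of $\lambda_0$ \emph{locally uniformly} in the integration variable and that the $L^1$ dominations established case-by-case in Section 5 (for quasi-elliptic, quasi-parabolic, and quasi-hyperbolic $H$) persist on a full complex neighborhood of $\lambda_0$, not merely for real $\lambda>n$. Given such a uniform holomorphic bound, Morera's theorem and Fubini let me move the continuation inside the integral; this is precisely where the restriction to the three well-behaved maximal abelian subgroups, and the convergence estimates proved there, do the real work. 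The remaining manipulations—shifting $A_\lambda B_\lambda$ through the inner product and recognizing the convolution kernel—are formal once the continuation is justified.
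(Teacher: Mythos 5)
Your proposal follows essentially the same route as the paper's proof: expand $R_{\lambda_0}^*f$ via the adjoint integral formula, interchange the order of integration, and use the unitarity of $\pi_\lambda(k^{-1})$ together with $\pi_\lambda(g)K_z=\overline{j_\lambda(g,z)}K_{g.z}$ and the $H$-invariance of $\varphi$ to reduce the two-variable kernel to a function of $k^{-1}h$, all wrapped in $\An_{\lambda_0}$. Your flagged obstacle (justifying the interchange of $\An_{\lambda_0}$ with the $k$-integration via uniform holomorphic bounds) is in fact passed over silently in the paper, so your treatment is, if anything, slightly more careful on that point.
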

\begin{proof} Let $h\in \widetilde{H}$, we have
\begin{align*}
    R_{\lambda_0}T_{\varphi}^{(\lambda_0)}R_{\lambda_0}^*f(&h) =D_{\lambda_0}(h)T_{\varphi}^{(\lambda_0)}R_{\lambda_0}^*f(h.z_0)\\
    &= D_{\lambda_0}(h) < T_{\varphi}^{(\lambda_0)}R_{\lambda_0}^*f, K_{h.z_0,\,\lambda_0}>_{\lambda_0}\\
    &= D_{\lambda_0}(h) \An_{\lambda_0}\Big{(} < T_{\varphi}^{(\lambda)}R_{\lambda_0}^*f, K_{h.z_0,\,\lambda_0}>_{A_{\lambda}^2}\Big{)}\\
   &=  D_{\lambda_0}(h) \An_{\lambda_0}\Big{(} < \varphi R_{\lambda_0}^*f, K_{h.z_0,\,\lambda_0}>_{A_{\lambda}^2}\Big{)}\\
    & = D_{\lambda_0}(h) \An_{\lambda_0}\Big{(} \int_{\mathbb{B}^n} \varphi(z) R_{\lambda_0}^*f(z) \overline{K_{h.z_0,\,\lambda_0}(z)} \, d\mu_{\lambda}(z) \Big{)}\\
    & = D_{\lambda_0}(h) \An_{\lambda_0}\Big{(} \int_{\mathbb{B}^n} \varphi(z) \int_{H/H_{z_0}} A_{\lambda_0}B_{\lambda_0}\big{(}f(k)\big{)} \overline{A_{\lambda_0}B_{\lambda_0}\big{(}D_{\lambda_0}(k)K_{z,\lambda_0}(k.z_0)} \big{)}\, dk \\ & \quad\quad\quad\quad \quad\quad\quad\quad \quad\quad\quad\quad  \overline{K_{h.z_0,\,\lambda_0}(z)} \, d\mu_{\lambda}(z) \Big{)}
\end{align*}
So, the operator $R_{\lambda_0}T_{\varphi}^{(\lambda_0)}R_{\lambda_0}^*f(h)$ can be written as 
\[\An_{\lambda_0}\Big{(} \int\limits_{\mathbb{B}^n} \varphi(z) \Big{[} \int\limits_{H/H_{z_0}} A_{\lambda_0}B_{\lambda_0}\big{(}f(k)\big{)} \overline{\strut A_{\lambda_0}B_{\lambda_0}\big{(}D_{\lambda_0}(k)K_{z,\lambda_0}(k.z_0) \overline{D_{\lambda_0}(h)} K_{h.z_0,\,\lambda_0}(z) } \big{)} dk \Big{]}  \, dz \Big{)}\]
and by interchanging the order of integration we get
\[\An_{\lambda_0} \int\limits_{H/H_{z_0}} A_{\lambda_0}B_{\lambda_0}\big{(}f(k)\big{)} \overline{\strut A_{\lambda_0}B_{\lambda_0}\big{(} <  \overline{ \, j_{\lambda}(h,z_0)} K_{h.z_0,\,\lambda}, \,\,\, \varphi \, \overline{j_{\lambda}(k,z_0)} K_{k.z_0,\lambda}>_{\lambda}  \big{)} } d\mu(k)\]

For $k\in \widetilde{H}$, define $W_k:= <  \overline{\, j_{\lambda}(h,z_0)} K_{h.z_0,\,\lambda}, \,\, \varphi \overline{j_{\lambda}(k,z_0)} K_{k.z_0,\lambda}>_{\lambda}$. Then $W_k$ can be written as

\begin{align*}
 W_k  &= <  \overline{ \, j_{\lambda}(h,z_0)}\pi(k^{-1}) K_{h.z_0,\,\lambda}, \,\,\, \overline{j_{\lambda}(k,z_0)}\pi(k^{-1})\big{(}\varphi K_{k.z_0,\lambda} \big{)} >_{\lambda}\\
   & = <  \overline{ \, j_{\lambda}(h,z_0)} \overline{j_{\lambda}(k^{-1},h.z_0)} K_{k^{-1}h.z_0,\,\lambda} \, , \,\,\, \varphi \, \overline{j_{\lambda}(k,z_0)}\overline{j_{\lambda}(k^{-1},k.z_0)} K_{z_0,\lambda}  >_{\lambda}\\
   & = <   \overline{j_{\lambda}(k^{-1}h,z_0)} K_{k^{-1}h.z_0,\,\lambda} , \,\,\, \varphi  K_{z_0,\lambda}  >_{\lambda}
\end{align*} 
Therefore,
\[ R_{\lambda_0}T_{\varphi}^{(\lambda_0)}R_{\lambda_0}^* f(h)=\int\limits_{H/H_{z_0}} A_{\lambda_0}B_{\lambda_0}(f(k)) \,\An_{\lambda_0} \Big{(} \overline{\strut A_{\lambda_0}B_{\lambda_0}\big{(}  <    \overline{j_{\lambda}(k^{-1}h,z_0)} K_{k^{-1}h.z_0,\,\lambda} ,  \varphi  K_{z_0,\lambda}  >_{\lambda}  }\big{)}\, \Big{)}\qedhere\]
\end{proof} 

Integration by parts implies the following corollary.
  \begin{corollary} \label{corollary convolution operator}
      The operator $R_{\lambda_0}T_{\varphi}^{(\lambda_0)}R_{\lambda_0}^*$ can be written as
      \[ R_{\lambda_0}T_{\varphi}^{(\lambda_0)}R_{\lambda_0}^*f(h)= f* Q\,\nu_{\varphi}^{\lambda_0}\]
      where $Q$ is a polynomial differential operator.
      \end{corollary}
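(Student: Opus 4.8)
The plan is to read the conclusion of Theorem~\ref{Theorem as a convolution operator} as a convolution and then to integrate by parts. Since the $\An_{\lambda_0}$-factor in that formula is, by the very definition of $\nu_{\varphi}^{\lambda_0}$, equal to $\nu_{\varphi}^{\lambda_0}(k^{-1}h.z_0)$ (replace $h$ by $k^{-1}h$ in the displayed definition), the theorem says exactly
\[
R_{\lambda_0}T_{\varphi}^{(\lambda_0)}R_{\lambda_0}^{*}f(h)=\int_{H/H_{z_0}}\big(A_{\lambda_0}B_{\lambda_0}f\big)(k)\,\nu_{\varphi}^{\lambda_0}(k^{-1}h.z_0)\,d\mu(k)=\big(A_{\lambda_0}B_{\lambda_0}f * \nu_{\varphi}^{\lambda_0}\big)(h).
\]
Thus the only thing to prove is that the operator $A_{\lambda_0}B_{\lambda_0}$ can be moved off the first factor and onto the kernel $\nu_{\varphi}^{\lambda_0}$, at the cost of replacing it by some polynomial differential operator $Q$.

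To carry this out I would integrate by parts against the invariant measure $d\mu$ on $M=H/H_{z_0}$, exactly as in the step that turns the first integral formula for $R_{\lambda}^{*}$ into the second one containing $Q$. By \eqref{Operator A and B}, $A_{\lambda_0}B_{\lambda_0}$ is a polynomial with $\lambda_0$-dependent scalar coefficients in the single first-order operator $N_H$, which equals $0$, $y\,\partial/\partial y$ or $\tanh(s)\,\partial/\partial s$ in the quasi-elliptic, quasi-parabolic and quasi-hyperbolic cases respectively; in particular it is a differential operator in the one noncompact ($\mathbb{R}$ or $\mathbb{R}^{+}$) coordinate of the abelian group $\widetilde{H}$. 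Repeatedly integrating by parts in that coordinate transfers each factor of $N_H$ from $\big(A_{\lambda_0}B_{\lambda_0}f\big)(k)$ onto $\nu_{\varphi}^{\lambda_0}(k^{-1}h.z_0)$ and produces the formal transpose of $A_{\lambda_0}B_{\lambda_0}$; collecting terms yields a single $f$-independent polynomial differential operator $Q$ acting on $\nu_{\varphi}^{\lambda_0}$ and gives
\[
R_{\lambda_0}T_{\varphi}^{(\lambda_0)}R_{\lambda_0}^{*}f(h)=\int_{H/H_{z_0}} f(k)\,\big(Q\nu_{\varphi}^{\lambda_0}\big)(k^{-1}h.z_0)\,d\mu(k)=\big(f*Q\nu_{\varphi}^{\lambda_0}\big)(h).
\]
This is consistent with, and in fact already contained in, the explicit kernels $\phi_H=QA_{\lambda}B_{\lambda}(\cdots)$ computed case by case in the previous section, where $Q$ was exhibited in precisely this form.

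The main obstacle is justifying the integration by parts, that is, the vanishing of the boundary terms at the ends of the $\mathbb{R}$- or $\mathbb{R}^{+}$-axis. This is supplied by the integrability estimates of the previous section: there each $\phi_H$, together with all the intermediate functions produced while differentiating $\big(\cosh s-\tfrac{1}{2(n-1)}\sum_i t_i\big)^{-\lambda}$ and its parabolic analogue, was shown to lie in $L^{1}(H/H_{z_0})$, while $A_{\lambda_0}B_{\lambda_0}f$ is controlled for $f\in\mathcal{S}_{\lambda_0}$, so the relevant products decay and the boundary contributions vanish. A secondary, more delicate point is the bookkeeping when $N_H$ is transported from the $k$-variable onto the argument $k^{-1}h.z_0$: because $N_H$ is not translation invariant in the parabolic and hyperbolic cases, the chain rule together with the reflection coming from inversion produces coefficients that must be rewritten (e.g.\ $y'=y-(y-y')$) so as to recombine into a differential operator depending only on the kernel variable, with no residual dependence on the output point $h$. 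Since $\widetilde{H}$ is abelian and $N_H$ is built from $\partial/\partial y$ or $\tanh(s)\,\partial/\partial s$, this is a routine case-by-case verification, and the resulting $Q$ is independent of $f$ and $\varphi$, as the statement requires.
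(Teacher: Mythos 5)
Your proposal is correct and follows essentially the same route as the paper, which disposes of this corollary with the single remark that integration by parts transfers $A_{\lambda_0}B_{\lambda_0}$ from the factor $A_{\lambda_0}B_{\lambda_0}f$ in the conclusion of Theorem \ref{Theorem as a convolution operator} onto the kernel, turning it into $Q\,\nu_{\varphi}^{\lambda_0}$ for a polynomial differential operator $Q$. Your additional attention to the vanishing of boundary terms (via the $L^1$ estimates of the preceding section) and to the chain-rule bookkeeping for the non-translation-invariant $N_H$ only makes explicit what the paper leaves implicit.
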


 For $f\in \mathcal{S}_{\lambda}$, we can write
 \begin{align*}
      U_{\lambda}^* T_{\varphi}^{(\lambda)}U_{\lambda}f & = (\sqrt{R_{\lambda}R_{\lambda}^*})^{-1} \, R_{\lambda}T_{\varphi}^{(\lambda)}R_{\lambda}^* (\sqrt{R_{\lambda}R_{\lambda}^*})^{-1}\\
  & = (R_{\lambda}R_{\lambda}^*)^{-1} \, R_{\lambda}T_{\varphi}^{(\lambda)}R_{\lambda}^*
 \end{align*}
  This with Corollary \ref{corollary convolution operator} implies the following proposition, which gives the spectral representation of the Toeplitz operators.
  
 \begin{theorem}
 Let $\lambda>0$, $H$ be a maximal abelian subgroup of $SU(n,1)$, and let 
 \[A=\lbrace \psi \in \widehat{H/H_{z_0}}\mid \widehat{\phi_H^{\lambda}}(\psi)\neq0 \rbrace.\]
 If $\varphi\in BC^n(\mathbb{B}^n)^H$ is $H-$invariant. Then 
 \[ \mathscr{F}U_{\lambda}^{-1}T_{\varphi}^{(\lambda)}U_{\lambda}\mathscr{F}^{-1}\omega(\psi)=\dfrac{\widehat{Q\nu_{\varphi}^{\lambda}}(\psi)}{\widehat{\phi_H^{\lambda}}(\psi)}\omega(\psi)\]
 for all $\psi \in A$, and $\omega\in L^2(\widehat{H/H_{z_0}})$ such that $\supp \omega\subseteq \supp  \widehat{\phi_H^{\lambda}} $. Here $\mathscr{F}f(\psi)= \mathscr{F}_{H/H_{z_0}}\widetilde{f}(\psi) $.
 \end{theorem}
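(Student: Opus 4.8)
The plan is to reduce the statement to the convolution theorem on the abelian quotient $H/H_{z_0}$, feeding in the two convolution formulas already established. I would begin from the identity recorded just before the statement,
\[
U_\lambda^{-1}T_\varphi^{(\lambda)}U_\lambda = (R_\lambda R_\lambda^*)^{-1}\,R_\lambda T_\varphi^{(\lambda)} R_\lambda^*,
\]
which is valid because $U_\lambda$ is unitary and because $R_\lambda R_\lambda^*$ and $R_\lambda T_\varphi^{(\lambda)} R_\lambda^*$ are both convolution operators on $H/H_{z_0}$, hence commute, so the two symmetric factors $(\sqrt{R_\lambda R_\lambda^*})^{-1}$ coalesce. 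This already isolates the two ingredients I need: $R_\lambda T_\varphi^{(\lambda)} R_\lambda^*$ is convolution by $Q\nu_\varphi^{\lambda}$ by Corollary \ref{corollary convolution operator}, while $R_\lambda R_\lambda^*$ is convolution by $\phi_H^{\lambda}$ through the formula $R_\lambda R_\lambda^* f = \chi_{-\lambda}.(\widetilde f * \phi_H)$.

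The second step is to transport these facts through $\mathscr{F}$. Since $\mathscr{F}f = \mathscr{F}_{H/H_{z_0}}\widetilde f$ first untwists by the character $\chi_\lambda$ and then applies the ordinary Fourier transform on the locally compact abelian group $H/H_{z_0}$, the convolution theorem yields
\[
\mathscr{F}\big(R_\lambda T_\varphi^{(\lambda)} R_\lambda^* f\big)(\psi) = \widehat{Q\nu_\varphi^{\lambda}}(\psi)\,\mathscr{F}f(\psi), \qquad \mathscr{F}\big(R_\lambda R_\lambda^* f\big)(\psi) = \widehat{\phi_H^{\lambda}}(\psi)\,\mathscr{F}f(\psi).
\]
Here I would invoke that $\phi_H^{\lambda}$ and $Q\nu_\varphi^{\lambda}$ lie in $L^1(H/H_{z_0})$, which was verified case by case in the previous section, so that by Young's inequality the convolution of an $L^2$ section with either kernel is again $L^2$, and its Fourier transform is the pointwise product of the bounded continuous multiplier with the $L^2$ function $\mathscr{F}f$.

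The third step is the inversion. On the set $A=\{\psi:\widehat{\phi_H^{\lambda}}(\psi)\neq 0\}$ the multiplier $\widehat{\phi_H^{\lambda}}$ is invertible, and conjugating $(R_\lambda R_\lambda^*)^{-1}$ by $\mathscr{F}$ converts it into division by $\widehat{\phi_H^{\lambda}}$. Composing with the previous step gives, for every $f$ whose spectral support lies in $\supp\widehat{\phi_H^{\lambda}}$,
\[
\mathscr{F}\big(U_\lambda^{-1}T_\varphi^{(\lambda)}U_\lambda f\big)(\psi) = \frac{\widehat{Q\nu_\varphi^{\lambda}}(\psi)}{\widehat{\phi_H^{\lambda}}(\psi)}\,\mathscr{F}f(\psi),
\]
and putting $\omega=\mathscr{F}f$ produces the asserted formula.

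I expect the genuine obstacle to be precisely this inversion step: $R_\lambda R_\lambda^*$ need not be boundedly invertible on all of $\mathcal{S}_\lambda$, exactly because $\widehat{\phi_H^{\lambda}}$ may vanish. This is why the statement restricts to $\omega$ with $\supp\omega\subseteq\supp\widehat{\phi_H^{\lambda}}$. I would use the description of $\mathcal{S}_\lambda$ as $\{f\in\mathcal{V}_\lambda : \mathscr{F}_{H/H_{z_0}}\widetilde f(\psi)=0 \text{ whenever } \widehat{\phi_H}(\psi)=0\}$ to guarantee that $\mathscr{F}f$ already vanishes off $\supp\widehat{\phi_H^{\lambda}}$, so that the ratio is well defined on the relevant subspace and no division by zero arises. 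The remaining care is measure-theoretic: one must check that the multiplier $\widehat{Q\nu_\varphi^{\lambda}}/\widehat{\phi_H^{\lambda}}$ is measurable and essentially bounded on $A$, so that it defines a genuine bounded multiplication operator, which is the diagonalization realizing the spectrum $(m_{\varphi,\lambda}(\psi))_\psi$ of $T_\varphi^{(\lambda)}$.
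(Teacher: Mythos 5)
Your proposal is correct and follows essentially the same route as the paper, which leaves the proof of this theorem implicit: it reduces $U_\lambda^{-1}T_\varphi^{(\lambda)}U_\lambda$ to $(R_\lambda R_\lambda^*)^{-1}R_\lambda T_\varphi^{(\lambda)}R_\lambda^*$ via the polar decomposition and then applies the convolution theorem on $H/H_{z_0}$ to the two kernels $\phi_H^{\lambda}$ and $Q\nu_\varphi^{\lambda}$. The extra justifications you supply --- the commutativity of the two convolution operators needed to coalesce the square-root factors, the $L^1$ hypotheses (noting the paper verifies these case by case only for $\phi_H^{\lambda}$, not explicitly for $Q\nu_\varphi^{\lambda}$), and the role of the support condition in avoiding division by zero --- are exactly the details the paper glosses over.
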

 
\nocite{*}
\begin{sloppypar}
   \printbibliography 
\end{sloppypar}

\end{document}